\begin{document}
\newtheorem{theo}{Theorem}[section]
\newtheorem{defin}[theo]{Definition}
\newtheorem{rem}[theo]{Remark}
\newtheorem{lem}[theo]{Lemma}
\newtheorem{cor}[theo]{Corollary}
\newtheorem{prop}[theo]{Proposition}
\newtheorem{exa}[theo]{Example}
\newtheorem{exas}[theo]{Examples}
\numberwithin{equation}{section}
%
\title[Strongly indefinite variational structure]{On differential systems with strongly indefinite variational structure}
\author[C. J. Batkam]{Cyril Jo\"el Batkam}
\address{D\'epartement de Math\'ematiques,
\\
Universit\'e de Sherbrooke,
\\
Sherbrooke, Qu\'ebec, \\J1K 2R1, CANADA.}
\email{cyril.joel.batkam@usherbrooke.ca}
\author[F. Colin]{Fabrice Colin}
\address{Department of Mathematics and Computer Science,\\
Laurentian University,\\
Ramsey Lake Road,\\ Sudbury, Ontario,\\
P3E 2C6, CANADA.}
\email{fcolin@cs.laurentian.ca}
\author[T. Kaczynski]{Tomasz Kaczynski}
\address{D\'epartement de Math\'ematiques,
\\
Universit\'e de Sherbrooke,
Sherbrooke, Qu\'ebec,\\ J1K 2R1, CANADA.}
\email{tomasz.kaczynski@usherbrooke.ca}
\begin{abstract}
We obtain multiplicity results for a class of first-order superquadratic Hamiltonian systems and a class of indefinite superquadratic elliptic systems which lead to the study of strongly indefinite functionals. There is no assumption to the effect that the nonlinear terms have to satisfy the Ambrosetti-Rabinowitz superquadratic condition. To establish the existence of solutions, a new version of the symmetric mountain pass theorem for strongly indefinite functionals is presented in this paper. This theorem is subsequently applied to deal with cases  where all the Palais-Smale sequences of the energy functional may be unbounded.
\end{abstract}
\subjclass{Primary 35K20; 35J45  Secondary 47J30; 35B10; 35J60; 35J57}
\keywords{Critical point theorems, Fountain theorems, Strongly indefinite functionals, Unbounded Hamiltonian systems, Elliptic systems}
\maketitle
\begin{center}
\textit{Dedicated to Andrzej Granas}
\end{center}
%
%
\section{Introduction}
\paragraph{} This paper is concerned with the existence and the multiplicity of critical points of strongly indefinite even functionals which appear in the study of periodic solutions of Hamiltonian systems as well as solutions for some classes of elliptic systems. We recall that a functional $\Phi:X\to\mathbb{R}$ defined on a Banach space is said to be strongly indefinite if it is neither bounded from above nor from below, even on subspaces of finite codimension. Its study then gives rise to an interesting and challenging variational problem, because the usual powerful critical point theorems in \cite{A-R,B,B-W,Zou} cannot be directly applied.
\paragraph{} Critical Point Theory for strongly indefinite even functionals was studied in \cite{Ba-Cla,B-C,B-C1,B-D1,Ben,F-W,Li}. In his seminal paper \cite{Ben}, Benci introduced various index and pseudo-index theories whose definitions, however, depend on the topology of the sublevel sets of the functional. In addition, a dimension property requirement significatively restricts the class of problems for which the results can be applied. In \cite{Ba-Cla,F-W,Li}, the Galerkin type approximations were used to reduce the study of strongly indefinite functionals to a semidefinite situation where the basic idea of Lusternik-Schnirelmann theory applies. However, in order to control the critical points of the reduced functional, it is required in theses papers that the original functional satisfies a strong version of the usual compactness condition used in Critical Point Theory. In the recent papers \cite{B-C,B-C1}, the first two authors of this paper generalized the well known fountain theorem of Bartsch and Willem to strongly indefinite functionals. In contrast with \cite{Ba-Cla,F-W,Li}, no reduction method was used and the proofs were directly carried out in the infinite-dimensional setting.
\paragraph{} The first goal of this paper is to extend the results presented in \cite{B-C,B-C1} to more general situations for which the Palais-Smale sequences of the functionals may be unbounded. This will have a crucial importance for the applications, since Palais-Smale type assumptions such as the Ambrosetti-Rabinowitz superlinear condition and its variations, extensively used in literature in the study of superlinear problems, can be avoided. We would like to stress that, in contrast with the common approach in the study of symmetric strongly indefinite functionals, ours is not based on any reduction method. The main ingredient is a weak-strong topology introduced by Kryszewski and Szulkin in \cite{K-S}. Our critical point theorems are stated in Section \ref{section2}.
\paragraph{} In Section \ref{section3}, we consider the applications to the problem of finding infinitely many large energy periodic solutions for the following first-order Hamiltonian system
 \begin{equation}\label{h}\tag{HS}
   \left\{
    \begin{array}{ll}
      \partial_tu-\Delta_x u+V(x)u=H_v(t,x,u,v)\,\, \textnormal{ in }\mathbb{R}\times\Omega, & \hbox{} \\
      -\partial_tv-\Delta_x v+V(x)v=H_u(t,x,u,v)\,\,\textnormal{ in }\mathbb{R}\times\Omega, & \hbox{}
      \end{array}
  \right.
\end{equation}
where $\Omega\subset\mathbb{R}^N$ ($N\geq1$) is a bounded smooth domain, and $H$ is a superquadratic $C^1$-function which is $T$-periodic ($T>0$) with respect to the $t$-variable. By periodic solution, we mean a solution $z=(u,v):\mathbb{R}\times\Omega\to\mathbb{R}^{2M}$ of \eqref{h} satisfying the conditions
\begin{align*}
z(t,x) &= z(t+T,x)\quad \forall (t,x)\in\mathbb{R}\times\Omega \\
  z(t,x) &= 0\quad \forall (t,x)\in\mathbb{R}\times\partial\Omega.
\end{align*}
 Setting
\begin{equation*}
    \mathcal{J}=\left(
                  \begin{array}{cc}
                    0 & -I \\
                    I & 0\\
                  \end{array}
                \right),\qquad \mathcal{J}_0=\left(
                  \begin{array}{cc}
                    0 & I \\
                    I & 0\\
                  \end{array}
                \right),\qquad z=(u,v),
\end{equation*}
\begin{equation*}
   A=\mathcal{J}_0(-\Delta_x+V), \text{ and }\mathcal{H}=-\frac{1}{2}(Az,z)+H(t,x,z),
\end{equation*}
system \eqref{h} simply reads:
\begin{equation*}
    \mathcal{J}\partial_t z=\mathcal{H}_z(t,x,z),\quad (t,x)\in\mathbb{R}\times\Omega,
\end{equation*}
which is the form of unbounded Hamiltonian systems or infinite-dimensional Hamiltonian systems in $L^2(\Omega,\mathbb{R}^{2M})$. This kind of systems were studied under various assumptions by Brezis and Nirenberg \cite{B-N}, and by Cl\'{e}ment et al. \cite{Cle}.
\paragraph{} Usually, in the study of superquadratic Hamiltonian systems, the nonlinear terms satisfy the following condition named after Ambrosetti and Rabinowitz \cite{A-R}
\begin{equation}\label{ar1}\tag{AR}
    \exists\mu>2,\,R>0\quad;\quad 0<\mu H(t,x,z)\leq H_z(t,x,z)z\text{ for }|z|\geq R,
\end{equation}
see, for instance \cite{Bar,B-D,Bar-Szu,R} and references therein. It is well known that this condition is mainly used to obtain the boundedness of the Palais-Smale sequences of the energy functional and  without it, the problem then becomes more complicated. Moreover without \eqref{ar1}, it might happen that all the Palais-Smale sequences of the functional would be unbounded.
\paragraph{} Let
\begin{equation*}
    \widetilde{H}(t,x,u,v):=\frac{1}{2}H_z(t,x,z)\cdot z-H(t,x,z), \quad S^1:=\mathbb{R}/(T\mathbb{Z}),\,\text{ and }\,\Theta=S^1\times\Omega.
\end{equation*}
Our assumptions on the potential $V$ are the following:
\begin{enumerate}
\item [$(V_1)$] $V\in C\big(\overline{\Omega},\mathbb{R}\big)$.
\item [$(V_2)$] $0\notin\sigma(S)$, $S=-\Delta_x+V$.
\end{enumerate}
For the Hamiltonian $H$ we make the following assumptions:
\begin{enumerate}
\item [$(H_1)$] $H\in C^1\big(\Theta\times\mathbb{R}^{2M},\mathbb{R}\big)$ is $T$-periodic with respect to $t$.
\item [$(H_2)$] $H(t,x,0)=0$ and $|H_z(t,x,z)|=\circ(|z|)$ as $z\to0,$ uniformly in $(t,x)$.
\item [$(H_3)$] $H(t,x,z)>0$ for $z\neq0$ and $\frac{H(t,x,z)}{|z|^2}\to\infty$ as $|z|\to\infty,$ uniformly in $(t,x)$.
\item [$(H_4)$] $\exists R>0$ such that
\begin{itemize}
\item[$(1)$] $\widetilde{H}(t,x,z)\geq a_1|z|^2$ if $|z|\geq R$,
\item [$(2)$] $\Big(\frac{|H_z(t,x,z)|}{|z|}\Big)^\sigma\leq a_2\widetilde{H}(t,x,z)$ if $|z|\geq R$,
\end{itemize}
where $a_1,a_2>0$, $\sigma>1$ if $N=1$ and $\sigma>\frac{N}{2}+1$ if $N\geq2$.
\end{enumerate}
 Under these assumptions, Mao et al. \cite{Mao-al} obtained a nontrivial periodic solution for \eqref{h} by using a local linking theorem. We will prove here that when the Hamiltonian is even with respect to $z$, the number of periodic solutions is in fact infinite. This has already been observed by Bartsch and Ding in \cite{B-D} where they studied \eqref{h} with $\Omega$ bounded and with $\Omega=\mathbb{R}^N$. However, they used condition \eqref{ar1} to verify the boundedness of the Palais-Smale sequences of the energy functional associated to \eqref{h}, which was crucial for their argument. It is well known that \eqref{ar1} implies that $H(t,x,z)\geq c|z|^\mu$ for $|z|$ large, therefore it is stronger than $(H_3)$. 
We would like to emphasize that in our situation, determining whether the Palais-Smale sequences of the energy functional are bounded or not can be a very difficult, if not impossible, task.\\
 Our result reads as follows:
\begin{theo}\label{resultat}
Assume that $(V_1)$, $(V_2)$, and $(H_1)$-$(H_4)$ are satisfied. If in addition $H$ is even in $z$, then \eqref{h} has infinitely many pairs $\pm z_k$ of $T$-periodic solutions such that $\|z_k\|_\infty\to\infty$ as $k\to\infty$.
\end{theo}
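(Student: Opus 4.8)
The plan is to recast \eqref{h} as the problem of finding critical points of an even, strongly indefinite functional and then to invoke the abstract symmetric mountain pass theorem for strongly indefinite functionals established in Section \ref{section2}. The first step is to build the variational framework. Assumptions $(V_1)$--$(V_2)$ ensure that $S=-\Delta_x+V$ is invertible, so that $0$ lies in a spectral gap of the self-adjoint realization $L$ of the linear part $\mathcal{J}\partial_t-A$ on $L^2(\Theta,\mathbb{R}^{2M})$ (with $T$-periodicity in $t$ and the Dirichlet condition on $\partial\Omega$). Since $L$ has compact resolvent, its spectrum consists of eigenvalues accumulating only at $\pm\infty$. Taking $E:=D(|L|^{1/2})$ with the inner product induced by $|L|$, one obtains an orthogonal splitting $E=E^+\oplus E^-$ into the positive and negative spectral subspaces, both of which are infinite-dimensional; this is exactly what renders the problem strongly indefinite.

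On $E$ I would define the energy functional
$$\Phi(z)=\tfrac12\big(\|z^+\|^2-\|z^-\|^2\big)-\int_\Theta H(t,x,z)\,dt\,dx,\qquad z=z^++z^-,$$
whose critical points are (weak, and after a bootstrap classical) $T$-periodic solutions of \eqref{h}. Using $(H_1)$--$(H_2)$ and the compact Sobolev embeddings of $E$ into the relevant $L^p(\Theta)$, one checks that $\Phi\in C^1(E,\mathbb{R})$, and $\Phi$ is even because $H$ is even in $z$. The next step is to verify the geometric hypotheses of the abstract theorem relative to the natural sequence of subspaces adapted to the splitting: the negligibility of the nonlinear term near the origin encoded in $(H_2)$ yields, on spheres of suitably growing radii in the positive subspace, a positive lower bound for $\Phi$, while the superquadraticity $(H_3)$ forces $\Phi$ to tend to $-\infty$ on the finite-dimensional pieces, supplying the linking geometry.

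The crux of the argument --- the step I expect to be the principal obstacle --- is the compactness condition, because the Ambrosetti--Rabinowitz condition \eqref{ar1} has been replaced by the weaker $(H_4)$ and the relevant Palais--Smale (or Cerami) sequences need not be bounded. Rather than attempting to bound such sequences, I would rely on the weak-strong topology of Kryszewski and Szulkin and the tailored compactness condition built into the abstract theorem. The quantitative content of $(H_4)$ is decisive here. Along a sequence with $\Phi(z_n)$ bounded and $\Phi'(z_n)\to0$ one has the identity $\Phi(z_n)-\tfrac12\Phi'(z_n)z_n=\int_\Theta\widetilde H(t,x,z_n)$, so $\int_\Theta\widetilde H(t,x,z_n)$ remains bounded; by $(H_4)(1)$ this controls $\int_{\{|z_n|\ge R\}}|z_n|^2$. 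The precise exponent in $(H_4)(2)$, namely $(|H_z|/|z|)^\sigma\le a_2\widetilde H$ with $\sigma>\tfrac N2+1$ (resp. $\sigma>1$ when $N=1$), is then exactly what is needed, via H\"older's inequality and the Sobolev embedding of $E$, to show that $H_z(t,x,z_n)$ is precompact in the dual of $E$; this is precisely the ingredient that drives the abstract compactness condition.

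Finally, the abstract theorem produces an unbounded sequence of critical values $c_k\to+\infty$ together with pairs $\pm z_k$ of critical points. A regularity bootstrap, using $(H_1)$--$(H_2)$ and the smoothing of the operator, upgrades these to classical $T$-periodic solutions. To obtain $\|z_k\|_\infty\to\infty$, note that at a critical point the identity above reads $c_k=\Phi(z_k)=\int_\Theta\widetilde H(t,x,z_k)$; were $\|z_k\|_\infty$ bounded along a subsequence, the continuity of $\widetilde H$ on $\overline\Theta$ times a bounded ball and the finiteness of $|\Theta|$ would force $\int_\Theta\widetilde H(t,x,z_k)$ to stay bounded, contradicting $c_k\to+\infty$.
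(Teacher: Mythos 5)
Your overall strategy --- the spectral splitting of $L=\mathcal{J}\partial_t+A$ on $X=D(|L|^{1/2})$, the even functional $\Phi(z)=\frac12\|z^+\|^2-\frac12\|z^-\|^2-\int_\Theta H$, the fountain-type geometry supplied by $(H_2)$--$(H_3)$, and the appeal to the abstract theorem of Section \ref{section2} --- matches the paper. But there is a genuine gap at the very step you identify as the crux. You write that ``rather than attempting to bound such sequences'' you will show that $H_z(t,x,z_n)$ is precompact in the dual of the energy space and let the Kryszewski--Szulkin topology do the rest. This cannot work. The $\tau$-topology enters only the deformation lemma and the minimax construction; the abstract theorem still delivers nothing more than a \emph{Cerami sequence} at each level $c_k$, and to convert it into a critical point one must verify the $(Ce)_{c_k}$-condition, i.e.\ prove that Cerami sequences admit convergent subsequences. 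Without first proving that such a sequence $(z_n)$ is \emph{bounded} in $X$ you cannot extract a weakly convergent subsequence, you have no control of $z_n$ in $L^{p+1}$ through the Sobolev embedding, and hence no precompactness of $H_z(\cdot,\cdot,z_n)$ in the dual space; your proposed substitute presupposes exactly what it is meant to avoid. (A smaller instance of the same slip: the identity $\Phi(z_n)-\frac12\langle\Phi'(z_n),z_n\rangle=\int_\Theta\widetilde{H}(t,x,z_n)$ bounds $\int_\Theta\widetilde{H}$ only because $\|z_n\|\,\|\Phi'(z_n)\|\to0$ along a Cerami sequence; for a mere Palais--Smale sequence with possibly unbounded $\|z_n\|$ it does not.)

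The boundedness proof is precisely the technical heart of the paper (Lemma \ref{cerami}) and uses all of $(H_4)$: assuming $\|z_n\|\to\infty$ and setting $w_n=z_n/\|z_n\|$, one first observes that $\langle\Phi'(z_n),z_n^+-z_n^-\rangle\to0$ forces $\int_\Theta (z_n^+-z_n^-)\cdot H_z(t,x,z_n)\,/\,\|z_n\|^2\to1$; then $(H_4)(1)$ together with the boundedness of $\int_\Theta\widetilde{H}(t,x,z_n)$ gives $|z_n|_2$ bounded, hence $|w_n|_2\to0$; finally H\"older's inequality with exponent $\sigma$, the interpolation inequality between $L^2$ and $L^{2(N+2)/N}$, and $(H_4)(2)$ show that the same integral tends to $0$, a contradiction. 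You would need to supply this (or an equivalent) argument for your proof to close; it is exactly where the hypothesis $\sigma>\frac N2+1$ is consumed. Your concluding argument for $\|z_k\|_\infty\to\infty$ via $c_k=\int_\Theta\widetilde{H}(t,x,z_k)\to\infty$ is fine once the rest is in place.
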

\paragraph{} Finally, as another application of our abstract result, we consider in Section \ref{section4} the following elliptic system of Hamiltonian type
\begin{equation}\label{s}\tag{ES}
    \left\{
      \begin{array}{ll}
        -\Delta u=g(x,v)\,\, \textnormal{in }\Omega, & \hbox{} \\
        -\Delta v=f(x,u)\,\,\textnormal{in }\Omega, & \hbox{} \\
        u=v=0\textnormal{ on }\partial\Omega, & \hbox{}
      \end{array}
    \right.
\end{equation}
where $\Omega$ is a bounded smooth domain in $\mathbb{R}^N$, $N\geq3$. The solutions of this problem describe steady states of some reaction-diffusion systems that derive from several applications, coming from mathematical biology or from the modelization of chemical reactions.
\paragraph{} We study this problem under the following assumptions:
\begin{enumerate}
  \item [$(E_1)$]$f,g\in\mathcal{C}(\Omega\times\mathbb{R})$ and there is a constant $C>0$ such that
\begin{equation*}
    |f(x,u)|\leq C(1+|u|^{p-1}) \textnormal{ and } |g(x,u)|\leq C(1+|u|^{q-1})\quad\forall (x,u),
\end{equation*}
\begin{equation*}
 \text{where } p,q>2\text{ satisfy }  \frac{1}{p}+\frac{1}{q}>1-\frac{2}{N}.
\end{equation*}
Furthermore, in case $N\geq5$ we impose
\begin{equation*}
    \frac{1}{p}>\frac{1}{2}-\frac{2}{N}\quad\textnormal{and}\quad \frac{1}{q}>\frac{1}{2}-\frac{2}{N}.
\end{equation*}
  \item [$(E_2)$] $f(x,u)=\circ(u)$ and $g(x,u)=\circ(u)$ as $u\to0$, uniformly in $x$. \\
  \item [$(E_3)$] $F(x,u)/u^2\to\infty$ and $G(x,u)/u^2\to\infty$, uniformly in $x$, as $|u|\to\infty$.\\
  \item [$(E_4)$]  $u\mapsto f(x,u)/|u|$ and $u\mapsto g(x,u)/|u|$ are increasing in $(-\infty,0)\cup(0,+\infty)$.\\
  \item [$(E_5)$] $f(x,-u)=-f(x,u)$ and $g(x,-u)=-g(x,u)$ for all $(x,u)$.
\end{enumerate}
Before stating our result for this problem, we recall the following definition.
\begin{defin}
We say that $(u,v)$ is a strong solution of \eqref{s} if $u\in W^{2,p/(p-1)}(\Omega)\cap W_0^{1,p/(p-1)}(\Omega)$, $v\in W^{2,q/(q-1)}(\Omega)\cap W_0^{1,q/(q-1)}(\Omega)$ and $(u,v)$ satisfies
\begin{equation*}
    \left\{
      \begin{array}{ll}
        -\Delta u=g(x,v)\,\, \textnormal{ a.e. in }\Omega, & \hbox{} \\
        -\Delta v=f(x,u)\,\,\textnormal{ a.e. in }\Omega. & \hbox{}
        \end{array}
    \right.
\end{equation*}
\end{defin}
We will also prove the following
\begin{theo}\label{mainresult2}
Under assumptions $(E_1)$-$(E_5)$, \eqref{s} has infinitely many pairs of strong solutions $\pm(u_k,v_k)$ such that $\|(u_k,v_k)\|\to\infty$ as $k\to\infty$, where $\|\cdot\|$ represents the norm in the space
\begin{equation*}
    W^{2,p/(p-1)}(\Omega)\cap W_0^{1,p/(p-1)}(\Omega)\times W^{2,q/(q-1)}(\Omega)\cap W_0^{1,q/(q-1)}(\Omega).
\end{equation*}
\end{theo}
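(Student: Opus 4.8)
The plan is to recast \eqref{s} as the search for critical points of a strongly indefinite even functional and then to invoke the abstract fountain-type theorem of Section \ref{section2}. First I would introduce the fractional Sobolev scale attached to $-\Delta$ under Dirichlet conditions: if $\{\lambda_j,\varphi_j\}$ denote its eigenvalues and $L^2$-normalized eigenfunctions, set $E^s=\{w=\sum_j c_j\varphi_j:\sum_j\lambda_j^{s}c_j^2<\infty\}$ with $\|w\|_{E^s}^2=\sum_j\lambda_j^{s}c_j^2$. Using the growth restrictions in $(E_1)$ one selects $s\in(0,2)$ so that the embeddings $E^s\hookrightarrow L^p(\Omega)$ and $E^{2-s}\hookrightarrow L^q(\Omega)$ are continuous and compact; the condition $\frac1p+\frac1q>1-\frac2N$ is precisely what guarantees the existence of such an $s$, while the extra constraints for $N\geq5$ keep both $s$ and $2-s$ strictly between $0$ and $2$. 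On $E:=E^s\times E^{2-s}$ I would define
\begin{equation*}
\Phi(u,v)=\int_\Omega\nabla u\cdot\nabla v\,dx-\int_\Omega\bigl(F(x,u)+G(x,v)\bigr)\,dx,
\end{equation*}
$F,G$ being the primitives of $f,g$. A routine verification shows $\Phi\in C^1(E,\mathbb{R})$, that its critical points are exactly the weak solutions of \eqref{s}, and that $\Phi$ is even, since $(E_5)$ forces $F$ and $G$ to be even.

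Next I would expose the strongly indefinite structure. Diagonalising the bilinear form $\int_\Omega\nabla u\cdot\nabla v$ via the substitution $a_j=\lambda_j^{s/2}u_j$, $b_j=\lambda_j^{(2-s)/2}v_j$ turns the quadratic part into $\sum_j a_jb_j$ and the norm into $\sum_j(a_j^2+b_j^2)$, whence $E=E^+\oplus E^-$ with $E^\pm=\{(u,v):v=\pm(-\Delta)^{s-1}u\}$, both infinite-dimensional and carrying respectively a positive and a negative definite quadratic form (the map $u\mapsto(-\Delta)^{s-1}u$ being an isometry $E^s\to E^{2-s}$). With respect to a basis adapted to this splitting I would build the filtration $Y_k\subset Z_k$ demanded by the theorem of Section \ref{section2} and verify its two geometric hypotheses: a uniform negativity of $\Phi$ on large spheres of the finite-dimensional pieces, furnished by the superquadraticity $(E_3)$, and a coercive lower bound $b_k\to\infty$ on small spheres of the tail subspaces, furnished by $(E_2)$ together with the decay $\sup\{\|w\|_{L^p}:w\in Z_k,\ \|w\|=1\}\to0$.

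The heart of the argument, and the step I expect to be the main obstacle, is the compactness condition of the abstract theorem, which must be established without the Ambrosetti–Rabinowitz condition \eqref{ar1}. Here I would exploit $(E_4)$: the monotonicity of $u\mapsto f(x,u)/|u|$ and of $u\mapsto g(x,u)/|u|$ makes $\widetilde F(x,u):=\tfrac12 f(x,u)u-F(x,u)$ and $\widetilde G(x,v):=\tfrac12 g(x,v)v-G(x,v)$ nonnegative and nondecreasing in $|u|$, $|v|$. For a Cerami sequence $z_n=(u_n,v_n)$ one computes
\begin{equation*}
\Phi(z_n)-\tfrac12\Phi'(z_n)z_n=\int_\Omega\bigl(\widetilde F(x,u_n)+\widetilde G(x,v_n)\bigr)\,dx,
\end{equation*}
which stays bounded. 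Assuming $\|z_n\|\to\infty$, I would normalise $w_n=z_n/\|z_n\|$ and pass to a weak limit $w$: the superquadraticity $(E_3)$ excludes $w\neq0$, while the monotonicity of $\widetilde F,\widetilde G$ excludes $w=0$ through the standard rescaling-and-testing argument that substitutes for \eqref{ar1}. This yields boundedness of Cerami sequences, after which the compact embeddings $E^s\hookrightarrow L^p$, $E^{2-s}\hookrightarrow L^q$ and the resulting weak-to-strong continuity of the nonlinear part — the Kryszewski–Szulkin mechanism — deliver the convergence required by the theorem.

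Applying the abstract theorem then produces an unbounded sequence of critical values and hence infinitely many pairs $\pm(u_k,v_k)$ of weak solutions with $\Phi(u_k,v_k)\to\infty$, which forces $\|(u_k,v_k)\|_E\to\infty$. Finally, since $f(x,u_k)$ and $g(x,v_k)$ lie in the dual Lebesgue spaces dictated by $(E_1)$, a bootstrap with Calderón–Zygmund elliptic estimates upgrades each weak solution to a strong solution lying in the product Sobolev space of the statement, and the same estimates transfer the divergence of the $E$-norm to the strong-solution norm, completing the proof of Theorem \ref{mainresult2}.
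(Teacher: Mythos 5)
Your proposal follows essentially the same route as the paper: the same fractional-space setting $E=E^s\times E^t$ with $s+t=2$ chosen via $(E_1)$, the same diagonal splitting $E^\pm=\{(u,\pm A^{s-t}u)\}$, the same fountain-theorem geometry from $(E_3)$ and the decay of the tail embedding constants, and the same replacement of \eqref{ar1} by the monotonicity $(E_4)$ — your ``rescaling-and-testing'' step is exactly what the paper implements through Proposition \ref{liu} to rule out $w^+=0$ before Fatou's lemma and $(E_3)$ rule out $w\neq0$. The only cosmetic difference is that you write the quadratic part as $\int_\Omega\nabla u\cdot\nabla v$, which must be read as the spectral pairing $\int_\Omega A^suA^tv$ for it to make sense on $E^s\times E^t$, as your own diagonalization indicates.
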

System \eqref{s} has been already studied from the variational point of view by many authors. Hulshof and van der Vorst \cite{H-V}, and de Figueiredo and Felmer \cite{Figue-Fel} obtained positive solutions by requiring among others that the Hamiltonian $H(u,v)=F(u)+G(v)$ satisfies \eqref{ar1}. In \cite{F-W}, Felmer and Wang considered the case where the full superquadratic range is not reached, and by using a variation of \eqref{ar1}, they obtained infinitely many solutions when $H$ is even in $(u,v)$ by means of the Galerkin approximation. Recently, Szulkin and Weth \cite{S-W} improved the above results by reducing the energy functional to the Nehari-Pankov manifold. To circumvent the difficulty that the Nehari-Pankov manifold is not necessary of class $C^1$, they required the maps in assumption $(E_4)$ to be strictly increasing. In \cite{Bat}, the first author of this paper showed that it is sufficient to assume that these maps are only increasing. He applied the generalized fountain theorem obtained in \cite{B-C2} to a family of perturbed functionals. However, working with a family of modified functionals could make  things unnecessary complicated. We considerably simplify this approach in the present paper.\\
\paragraph{} Throughout the paper, we denote by $|\cdot|_r$ the norm of the Lebesgue space $L^r$.

\section{Critical point theorems for strongly indefinite functionals}\label{section2}
Let $X$ be a separable Hilbert space which admits an orthogonal decomposition $X=X^-\oplus X^+$, where $X^-$ is closed and $X^+=(X^-)^\perp$. We denote by $\big<,\big>$ the inner product of $X$. \\
Let $(a_j)_{j\geq0}$ be an orthonormal basis of $X^-$. We define on $X$ a new norm by setting
\begin{equation*}
    \vvvert u\vvvert:=\max\Big(\sum\limits_{j=0}^{\infty}\frac{1}{2^{j+1}}|\big<P^-u,a_j\big>|,\|P^+u\|\Big),\,\, u\in X,
\end{equation*}
where $P^\pm:X\to X^\pm$ are orthogonal projections, and we denote by $\tau$ the topology generated by this norm. This topology was introduced by Kryszewski and Szulkin in \cite{K-S} and is related to the topology on $X$ which is strong on $X^+$ and weak on bounded sets of $X^-$. More precisely, if $(u_n)$ is a bounded sequence in $X$ then
\begin{equation*}
    u_n
\stackrel{\tau}{\rightarrow}u  \Longleftrightarrow P^-u_n \rightharpoonup P^-u \,\ \text{and} \,\ P^+u_n \rightarrow P^+u.
\end{equation*}
Now we recall some standard notations in Critical Point Theory:\\
Let $\Phi:X\to\mathbb{R}$ and $a,b\in\mathbb{R}$. We denote by
\begin{equation*}
    \Phi^b:=\big\{u\in X\,\ ; \,\ \Phi(u)\leq a\big\},\quad \Phi_a:=\big\{u\in X\,;\, \Phi(u)\geq a\big\},\text{ and }\Phi_a^b:=\Phi_a\cap \Phi^b.
\end{equation*}
We say that a functional $\Phi\in C^1(X,\mathbb{R})$ satisfies the Palais-Smale condition at the level $c\in \mathbb{R}$ if the following holds:
\begin{itemize}
  \item [$(PS)_c$] Any sequence $(u_n)\subset X$ such that $\Phi(u_n)\to c$ and $\Phi'(u_n)\to0$ \big(Palais-Smale sequence at the level $c$\big) admits a convergent subsequence.
\end{itemize}
This is the famous compactness condition in Critical Point Theory introduced by Palais and Smale \cite{P-S}. Since we would like to consider some situations where this condition does not hold, we will use its following weaker version due to Cerami \cite{Ce}:
 \begin{itemize}
  \item [$(Ce)_c$] Any sequence $(u_n)\subset X$ such that $\Phi(u_n)\to c$ and $(1+\|u_n\|)\Phi'(u_n)\to0$ \big(Cerami sequence at the level $c$\big) admits a convergent subsequence.
\end{itemize}
If $(Ce)_c$ holds we say that the functional $\Phi$ satisfies the Cerami condition at the level $c$ \big($(Ce)_c$-condition for short\big).
\paragraph{} We now consider the class of $C^1$ functionals $\Phi:X\to\mathbb{R}$ which satisfying:
\begin{itemize}
  \item [$(KS)$] $\Phi(u)=\frac{1}{2}\|P^+u\|^2-\frac{1}{2}\|P^-u\|^2-\Psi(u)$, where $\Psi\in\mathcal{C}^1(X,\mathbb{R})$ is bounded below, weakly sequentially lower semicontinuous, and $\Psi'$ is weakly sequentially continuous.
\end{itemize}
\paragraph{} The following deformation lemma will play a key role in the proof of our abstract results.
\begin{lem}[Deformation lemma]\label{dl}
Assume that $\Phi$ satisfies $(KS)$. Let $d\geq b$ and $\varepsilon>0$ such that
\begin{equation}\label{hypo}
    \forall u\in \Phi^{-1}([b-2\varepsilon,d+2\varepsilon]), \, \big(1+\|u\|\big)\|\Phi'(u)\|\geq 8\varepsilon.
\end{equation}
Then there exists $\eta \in\mathcal{C}([0,1]\times\Phi^{d+2\epsilon},X)$ such that:
\begin{itemize}
  \item [(i)] $\eta(0,u)=u$ for all $u\in\Phi^{d+2\varepsilon},$
  \item [(ii)] $\eta(1,\Phi^{d+\varepsilon})\subset\Phi^{b-\varepsilon},$
  \item [(iii)] $\Phi(\eta(\cdot,u))$ is non increasing, $\forall u\in \Phi^{d+2\varepsilon}$,
  \item [(iv)] each point $(t,u)\in [0,1]\times\Phi^{d+2\varepsilon}$ has a $\tau$-neighborhood $N_{(t,u)}$ such that $\big\{v-\eta(s,v)\, \bigl| \, (s,v)\in N_{(t,u)}\cap([0,1]\times\Phi^{d+2\varepsilon})\bigr.\big\}$ is contained in a finite-dimensional subspace of $X$,
  \item [(v)] $\eta$ is $\tau$-continuous,
  \item [(vi)] if $\Phi$ is even then $\eta(t,\cdot)$ is odd $\forall t\in [0,1]$.
\end{itemize}
\end{lem}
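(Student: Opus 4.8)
The plan is to realize $\eta$ as the flow of a pseudo-gradient vector field adapted to the topology $\tau$, following the scheme of Kryszewski and Szulkin \cite{K-S} but with the field rescaled so as to absorb the Cerami weight $1+\|u\|$. Write $\nabla\Phi(u)=P^+u-P^-u-\nabla\Psi(u)$ and set $A:=\Phi^{-1}([b-2\varepsilon,d+2\varepsilon])$, so that \eqref{hypo} reads $(1+\|u\|)\|\nabla\Phi(u)\|\ge 8\varepsilon$ on $A$. Put $\rho:=(d-b)+2\varepsilon$, the maximal decrease of $\Phi$ that (ii) requires.

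I would first construct the field locally. Fix $u_0\in A$. Since $\nabla\Phi(u_0)\neq0$, the vector $w_0$ may be taken in a finite-dimensional subspace $F$: projecting $\nabla\Phi(u_0)$ onto a large enough $F$ and normalizing yields a unit $w_0\in F$ with $\langle\nabla\Phi(u_0),w_0\rangle>\frac12\|\nabla\Phi(u_0)\|$. Put $v_0:=\mu(1+\|u_0\|)w_0$, where $\mu$ is chosen once and for all so that $4\mu\varepsilon\ge\rho$; then $\langle\nabla\Phi(u_0),v_0\rangle>\frac\mu2(1+\|u_0\|)\|\nabla\Phi(u_0)\|\ge4\mu\varepsilon\ge\rho$. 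Because $v_0$ lies in a fixed finite-dimensional subspace, the map $u\mapsto\langle\nabla\Phi(u),v_0\rangle$ is $\tau$-continuous — its three pieces test $P^+u$ strongly, $P^-u$ against finitely many fixed vectors, and $\nabla\Psi(u)$ against the fixed $v_0$, the last being $\tau$-continuous by the weak sequential continuity of $\Psi'$ in $(KS)$ — so the strict inequality $\langle\nabla\Phi(u),v_0\rangle>\rho$ persists on a $\tau$-neighborhood $U_{u_0}\subset A$.

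Next I would globalize and flow. The metric space $(A,\tau)$ is paracompact, so the cover $\{U_{u_0}\}$ has a $\tau$-locally finite refinement with a subordinate $\tau$-Lipschitz partition of unity $\{\pi_i\}$; with $v_i$ the vector attached to the $i$-th piece I set $f(u):=g(u)\sum_i\pi_i(u)v_i$, where $g$ is a $\tau$-Lipschitz cutoff localizing $f$ to $A$ and equal to $1$ on $\{\Phi\ge b-\varepsilon\}$. Then $f$ is $\tau$-locally Lipschitz, locally finite-dimensional valued (only finitely many $\pi_i$ are nonzero near a point), satisfies $\langle\nabla\Phi(u),f(u)\rangle>\rho$ wherever $g(u)=1$, and $\|f(u)\|\le C(1+\|u\|)$. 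Defining $\eta$ by $\partial_t\eta=-f(\eta)$, $\eta(0,\cdot)=\mathrm{id}$, the last bound prevents finite-time blow-up so $\eta$ exists on $[0,1]$; the $\tau$-local Lipschitz and finite-dimensional structure yield (v) and (iv) (the displacement $v-\eta(s,v)=\int_0^sf(\eta)\,dr$ lies in the finite span of the active $v_i$), (i) is immediate, and $\frac{d}{dt}\Phi(\eta)=-\langle\nabla\Phi(\eta),f(\eta)\rangle\le0$ gives (iii). For (ii): if $\Phi(u)\le d+\varepsilon$, then as long as $\Phi(\eta(t,u))\ge b-\varepsilon$ one has $g(\eta)=1$ and $\frac{d}{dt}\Phi(\eta)<-\rho$, so $\Phi$ drops from at most $d+\varepsilon$ to $b-\varepsilon$ before time $\rho^{-1}\big((d+\varepsilon)-(b-\varepsilon)\big)=1$, and monotonicity keeps it below thereafter; thus $\eta(1,u)\in\Phi^{b-\varepsilon}$. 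If $\Phi$ is even then $\nabla\Phi$ is odd, and choosing the cover and the $v_i$ symmetrically renders $f$ odd and $\eta(t,\cdot)$ odd, giving (vi).

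The main obstacle is reconciling the descent property with $\tau$-continuity. The natural direction $-\nabla\Phi$ contains $P^-u$ and $\nabla\Psi(u)$, controlled only weakly on $X^-$, so one cannot flow along it directly; freezing the descent direction in a finite-dimensional subspace over an entire $\tau$-neighborhood, which is exactly what the weak sequential continuity of $\Psi'$ permits, is what makes the field $\tau$-continuous. Two further points are genuinely delicate and absorb most of the work: the Cerami weight $1+\|u\|$ is itself not $\tau$-continuous (as $\|P^-u\|$ is only weakly lower semicontinuous), so the rescaling must be carried by the locally constant values $\mu(1+\|u_i\|)$ while keeping the global bound $\|f(u)\|\le C(1+\|u\|)$; and the localizing cutoff $g$ must confine the flow to $A$ despite $\Phi$ being only $\tau$-upper semicontinuous, which is where the semicontinuity hypotheses of $(KS)$ are used in full.
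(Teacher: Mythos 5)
Your construction is essentially the paper's own proof (which follows Li--Szulkin and Kryszewski--Szulkin): freeze the descent directions over $\tau$-neighborhoods using the weak sequential continuity of $\Psi'$, glue them with a $\tau$-locally finite $\tau$-Lipschitz partition of unity, bound the resulting field by $C(1+\|u\|)$ so the flow is global, and symmetrize for the even case; your deviations (projecting the gradient into a finite-dimensional subspace, using an explicit cutoff $g$ instead of adjoining the $\tau$-open set $\Phi^{-1}(]-\infty,b-2\varepsilon[)$ to the cover with zero field, and putting the scale $\mu(1+\|u_i\|)$ into the field rather than into the time reparametrization) are cosmetic. The one point you flag but do not actually close --- getting $1+\|u_i\|\le C(1+\|u\|)$ for all $u$ in the $i$-th patch, which is needed for the global bound on $f$ --- is settled exactly as in the paper by observing that the balls $\{v\in X:\|v\|\le\alpha\}$ are $\tau$-closed, so each neighborhood may be shrunk to guarantee $\|u_i\|<2\|v\|$ on it.
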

\begin{proof}
 It goes back to an idea of Li and Szulkin \cite{LiSzul02}.
\paragraph{} We define the vector field
\begin{equation*}
    w(u):=2\nabla\Phi(u)/\|\Phi'(u)\|^2,\quad u\in\Phi^{-1}\big([b-2\varepsilon,d+2\varepsilon]\big).
\end{equation*}
By assumption $(KS)$ we know that $\Phi'$ is weakly sequentially semicontinuous, and this implies that the function
\begin{equation*}
    v\in\Phi^{d+2\varepsilon}_{b-2\varepsilon}\mapsto \big<\Phi'(v),w(u)\big>\in\mathbb{R}
\end{equation*}
 is $\tau$-continuous. Hence every $u\in\Phi^{d+2\varepsilon}_{b-2\varepsilon}$ has a $\tau$-neighborhood $N_u$ such that
\begin{equation}\label{cond1}
    \big<\Phi'(v),w(u)\big>>1\quad\forall v\in N_u,
\end{equation}
\begin{equation}\label{cond2}
    \|u\|<2\|v\|\quad\forall v\in N_u,
\end{equation}
where \eqref{cond2} holds because the set $\big\{z\in X\,;\,\|z\|\leq\alpha\big\}$ is $\tau$-closed for any $\alpha\geq0$.\\
Now since $(KS)$ implies that $\Phi$ is $\tau$-upper semicontinuous, the set $\widetilde{N}:=\Phi^{-1}\big(]-\infty,b-2\varepsilon[\big)$ is $\tau$-open. It follows that $\mathcal{N}:=\widetilde{N}\cup\big\{N_v\,;\,b-2\varepsilon\leq\Phi(v)\leq d+2\varepsilon\big\}$ is a $\tau$-open covering of the metric space $\big(\Phi^{d+2\varepsilon},\tau\big)$. We can then extract a $\tau$-locally finite $\tau$-open covering $\mathcal{M}:=\big\{M_i\,;\,i\in I\big\}$ of $\Phi^{c+2\epsilon}$ finer than $\mathcal{N}$.\\
Let
\begin{equation*}
    V:=\bigcup\limits_{i\in I}M_i.
\end{equation*}
For every $i\in I$ we have either $M_i\subset N_v$ for some $v$ or $M_i\subset \widetilde{N}$. In the first case we define $v_i:=w(v)$ and in the second case $v_i:=0$. Consider  a $\tau$-Lipschitz continuous partition of unity $\big\{\lambda_i\, ; \, i\in I  \big\}$ subordinated to $\mathcal{M}$ and define on $V$ the vector field
\begin{equation*}
f(u):=\sum\limits_{i\in I}\lambda_i(u)v_i.
\end{equation*}
Clearly, the vector field $f$ is locally Lipschitz continuous and $\tau$-locally Lipschitz continuous. By using \eqref{hypo} and \eqref{cond2} we see that
\begin{equation*}
    \|f(u)\|\leq\frac{1}{4\varepsilon}\big(1+2\|u\|\big)\quad\forall u\in V.
\end{equation*}
 It follows from Corollary $7.6$ in \cite{Sch} that the problem
$$\left\{
    \begin{array}{ll}
      \frac{d}{dt}\sigma(t,u)=-f(\sigma(t,u)) & \hbox{} \\
      \sigma(0,u)=u \in\Phi^{d+2\varepsilon} & \hbox{}
    \end{array}
  \right.
$$
has a unique solution $\sigma(\cdot,u)$ defined on ${\mathbb R}^+$.\\
We define $\eta:[0,1]\times \Phi^{d+2\varepsilon}$ by setting
$$\eta(t,u):=\sigma\big((d-b+2\varepsilon)t,u\big).$$
An argument similar to that in the proof of Lemma $6.8$ in \cite{W} shows that $\eta$ satisfies $(i)$-$(v)$. If $\Phi$ is even, then we replace $f(u)$ with $\frac{1}{2}\big(f(u)-f(-u)\big)$. $(vi)$ is then a consequence of the existence and the uniqueness of the solution for the above Cauchy problem.
\end{proof}
Now we can state our first critical point theorem, that extends the generalized saddle point theorem in \cite{L-S} to the case where the Palais-Smale sequences of $\Phi$ may be unbounded.
\begin{theo}[Saddle point theorem]\label{theo1}
Assume that $\Phi$ satisfies $(KS)$. If there exists $R>0$ such that
\begin{equation*}
    (A_0)\quad\quad\quad b:=\inf_{\substack{u\in X^+}}\Phi(u)>\sup_{\substack{u\in X^-\\ \|u\|=R}}\Phi(u) \quad\text{and}\quad d:=\sup_{\substack{u\in X^-\\ \|u\|\leq R}}\Phi(u)<\infty,\quad
\end{equation*}
then for some $c\in[b,d]$, there is a sequence $(u_n)\subset X$ such that
\begin{equation*}
    \Phi(u_n)\to c\quad\text{and}\quad \big(1+\|u_n\|\big)\Phi'(u_n)\to0\,\,\text{as}\,\, n\to\infty.
\end{equation*}
Moreover, if $\Phi$ satisfies the $(Ce)_\alpha$ condition for all $\alpha\in[b,d]$, then $\Phi$ has a critical value in $[b,d]$.
\end{theo}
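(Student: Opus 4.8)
The plan is to set up a minimax scheme based on the linking between the ball $D:=\{u\in X^-:\|u\|\le R\}$ and the subspace $X^+$, and then to extract the Cerami sequence by contradiction through the Deformation Lemma \ref{dl}. Write $\partial D=\{u\in X^-:\|u\|=R\}$ and let $\Gamma$ be the class of maps $\gamma:D\to X$ that are $\tau$-continuous, satisfy $\gamma(u)=u$ for every $u\in\partial D$, and enjoy the local finite-dimensional difference property of item (iv) of Lemma \ref{dl} (each point of $D$ has a $\tau$-neighborhood on which $\{\gamma(v)-v\}$ lies in a finite-dimensional subspace). Since $\mathrm{id}_D$ belongs to $\Gamma$, the class is nonempty, and I set
\[
c:=\inf_{\gamma\in\Gamma}\ \sup_{u\in D}\Phi(\gamma(u)).
\]

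First I would show that $c\in[b,d]$. The bound $c\le d$ is immediate by testing with $\gamma=\mathrm{id}_D$, which gives $\sup_{u\in D}\Phi(u)=d$. For the lower bound I claim the linking relation $\gamma(D)\cap X^+\neq\emptyset$ for every $\gamma\in\Gamma$; granting this, any $u_0\in D$ with $\gamma(u_0)\in X^+$ yields $\Phi(\gamma(u_0))\ge\inf_{X^+}\Phi=b$, whence $\sup_D\Phi\circ\gamma\ge b$ and $c\ge b$. To prove the claim I would first upgrade the local difference property to a global one: since $D$ is bounded and $\tau$-closed in $X^-$ it is $\tau$-compact, so finitely many of the neighborhoods from (iv) cover $D$ and the spanned subspaces combine into a single finite-dimensional $W\subset X$ with $\gamma(u)-u\in W$ for all $u\in D$. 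Consequently $P^-\gamma=\mathrm{id}+P^-(\gamma-\mathrm{id})$ is a finite-rank (hence compact) perturbation of the identity on $D\subset X^-$, norm-continuous because weak and strong convergence agree on the finite-dimensional range; since $P^-\gamma(u)=u\neq0$ on $\partial D$, the Leray--Schauder degree $\deg(P^-\gamma,\mathrm{int}\,D,0)$ is defined, and the admissible homotopy $u\mapsto u+tP^-(\gamma(u)-u)$ (which stays equal to $u\neq0$ on $\partial D$) shows it equals $\deg(\mathrm{id},\mathrm{int}\,D,0)=1$. Hence $P^-\gamma$ vanishes somewhere in $D$, i.e.\ $\gamma(u_0)\in X^+$.

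Next I would produce the Cerami sequence at level $c$ by contradiction. If none existed, there would be $\varepsilon>0$ with $(1+\|u\|)\|\Phi'(u)\|\ge8\varepsilon$ on $\Phi^{-1}([c-2\varepsilon,c+2\varepsilon])$; using $(A_0)$, namely $\sup_{\partial D}\Phi<b\le c$, I may shrink $\varepsilon$ so that also $\sup_{\partial D}\Phi<c-2\varepsilon$. Applying Lemma \ref{dl} with $b=d=c$ furnishes $\eta$ with $\eta(1,\Phi^{c+\varepsilon})\subset\Phi^{c-\varepsilon}$ and, by its construction, $\eta(1,\cdot)=\mathrm{id}$ on $\{\Phi<c-2\varepsilon\}$. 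Choosing $\gamma_0\in\Gamma$ with $\sup_D\Phi\circ\gamma_0<c+\varepsilon$ and setting $\gamma_1:=\eta(1,\gamma_0(\cdot))$, properties (iv)--(v) and the stability of the class under composition give $\gamma_1\in\Gamma$ (the difference $\gamma_1-\mathrm{id}$ is still locally finite-dimensional, $\gamma_1$ is $\tau$-continuous, and $\gamma_1=\mathrm{id}$ on $\partial D$ because there $\gamma_0=\mathrm{id}$ and $\Phi<c-2\varepsilon$), while $\sup_D\Phi\circ\gamma_1\le c-\varepsilon<c$, contradicting the definition of $c$. Taking $\varepsilon=1/n\to0$ then yields $(u_n)$ with $\Phi(u_n)\to c$ and $(1+\|u_n\|)\Phi'(u_n)\to0$.

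Finally, if $\Phi$ satisfies $(Ce)_\alpha$ for all $\alpha\in[b,d]$, then in particular at $\alpha=c$ the Cerami sequence above has a convergent subsequence $u_{n_k}\to u$; continuity of $\Phi$ and $\Phi'$ gives $\Phi(u)=c$ and, since $(\|u_{n_k}\|)$ is then bounded, $\Phi'(u)=0$, so $c\in[b,d]$ is a critical value. The main obstacle is the linking step $\gamma(D)\cap X^+\neq\emptyset$: the maps in $\Gamma$ need not have finite-dimensional range, and it is the $\tau$-compactness of $D$ together with property (iv) that lets me confine $\gamma-\mathrm{id}$ to a fixed finite-dimensional space and thereby invoke a degree argument.
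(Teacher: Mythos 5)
Your overall architecture differs from the paper's: you run a minimax over a class $\Gamma$ of $\tau$-continuous maps fixing $\partial D$ and prove the intersection property $\gamma(D)\cap X^+\neq\emptyset$ by globalizing the finite-dimensional difference property via $\tau$-compactness of $D$ and invoking the Leray--Schauder degree, whereas the paper works with no minimax class at all: it applies the deformation $\eta$ directly to the ball $M$, sets $\mu(t,u)=P^-\eta(t,u)$, checks $0\notin\mu([0,1]\times\partial M)$ using only property (iii) of Lemma \ref{dl} together with $(A_0)$, and concludes by homotopy invariance of the Kryszewski--Szulkin degree. Your intersection argument is fine (on bounded subsets of $X^-$ the $\tau$-topology is the weak topology, so $D$ is $\tau$-compact, $\gamma-\mathrm{id}$ lands in a fixed finite-dimensional $W$ on which $\tau$ coincides with the norm topology, and $P^-\gamma$ is a norm-continuous finite-rank perturbation of the identity), and it buys you a named level $c=\inf_\Gamma\sup_D\Phi\circ\gamma\in[b,d]$, which the paper's proof does not identify.

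There is, however, one genuine gap: the assertion that ``by its construction'' $\eta(1,\cdot)=\mathrm{id}$ on $\{\Phi<c-2\varepsilon\}$, which you need in order to have $\gamma_1=\eta(1,\gamma_0(\cdot))\in\Gamma$ (the class requires $\gamma_1=\mathrm{id}$ on $\partial D$). This is not among properties (i)--(vi) of Lemma \ref{dl}, and it does not follow from the construction given there: the vector field is $f(u)=\sum_i\lambda_i(u)v_i$ for a partition of unity subordinated to a refinement of the covering $\widetilde N\cup\{N_v\}$ with $\widetilde N=\Phi^{-1}(]-\infty,b-2\varepsilon[)$, and a point $u\in\widetilde N$ may also lie in members $M_i$ contained in some $N_v$ with $\Phi(v)\geq b-2\varepsilon$, for which $v_i=w(v)\neq 0$; hence $f$ need not vanish on $\widetilde N$ and the flow can move points of $\partial D$. (This is precisely the delicate point of the Li--Szulkin/Kryszewski--Szulkin deformation: one cannot simply cut the field off with a function of $\Phi$ without losing $\tau$-continuity, since $\Phi$ is only $\tau$-upper semicontinuous.) All the lemma guarantees on $\partial D$ is $\Phi(\eta(1,u))\leq\Phi(u)<b$, so $\gamma_1$ still misses $X^+$ on $\partial D$ but is not the identity there. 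To repair your argument you must either weaken the boundary condition in $\Gamma$ (e.g.\ replace ``$\gamma=\mathrm{id}$ on $\partial D$'' by ``$\Phi(\gamma(u))\leq\Phi(u)$'' plus an admissible-homotopy-to-the-identity requirement, and redo the degree count accordingly), or abandon the minimax class and argue as the paper does, where only $\Phi\circ\eta$ non-increasing and $(A_0)$ are used to keep $0$ out of $\mu([0,1]\times\partial M)$.
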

The proof follows the lines of \cite{L-S}.
\begin{proof}
We assume by contradiction that for every $c\in[b,d]$ there is no $(Ce)_c$ sequence for $\Phi$. Then there exists $\varepsilon>0$ such that
\begin{equation*}
    u\in\Phi^{-1}\big([b-2\varepsilon,d+2\varepsilon]\big)\Rightarrow \big(1+\|u\|\big)\|\Phi'(u)\|\geq8\varepsilon.
\end{equation*}
Let
\begin{equation*}
    M:=\big\{u\in X^-\,;\, \|u\|=R\big\}.
\end{equation*}
We apply Lemma \ref{dl}, and we define $\mu:[0,1]\times M\to X$ by setting $\mu(t,u):=P^-\eta(t,u)$, where $\eta$ is given by Lemma \ref{dl}. Clearly, $(iv)$ and $(v)$ of Lemma \ref{dl} imply that $\mu$ is $\tau$-continuous and each $(t,u)\in [0,1]\times\Phi^{d+2\varepsilon}$ has a $\tau$-neighborhood $N_{(t,u)}$ such that $\big\{v-\mu(s,v)\, \bigl| \, (s,v)\in N_{(t,u)}\cap([0,1]\times\Phi^{d+2\varepsilon})\big\}$ is contained in a finite-dimensional subspace of $X$.\\
We claim that $0\notin \mu\big([0,1]\times\partial M\big)$. Indeed, if there exists $(t_0,u_0)\in[0,1]\times\partial M$ such that $\mu(t_0,u_0)=0$, then $\eta(t_0,u_0)\in X^+$ and by $(iii)$ of Lemma \ref{dl}  and assumption $(A_0)$ we have
\begin{equation*}
    \Phi(u_0)=\Phi\big(\eta(0,u_0)\big)\geq\Phi\big(\eta(t_0,u_0)\big)\geq b>\sup_{\partial M}\Phi,
\end{equation*}
which contradicts the fact that $u_0\in\partial M$.\\
$\mu$ is then an admissible homotopy \big(in the sense of Kryszewski and Szulkin \cite{K-S}\big) such that $0\notin \mu\big([0,1]\times\partial M\big)$. It follows from Theorem $2.4$-$(iii)$ of \cite{K-S} that the Kryszewski-Szulkin's degree (see \cite{K-S}) $deg_{KS}\big(\mu(t,\cdot),int(M),0\big)$ is well defined and does not depend on $t\in[0,1]$. Hence
\begin{equation*}
    deg_{KS}\big(\mu(1,\cdot),int(M),0\big)=deg_{KS}\big(\mu(0,\cdot),int(M),0\big)=deg_{KS}\big(id,int(M),0\big).
\end{equation*}
It follows from Theorem $2.4$-$(i)$ of \cite{K-S}  that there is $u\in int(M)$ such that $\mu(1,u)=0$, which implies that $\eta(1,u)\in X^+$. By the definition of $b$ we have $b\leq\Phi\big(\eta(1,u)\big)$. But $(ii)$ of Lemma \ref{dl} implies, since $M\subset\Phi^{d+\varepsilon}$, that $\Phi\big(\eta(1,u)\big)\leq b-\varepsilon$. This gives a contradiction.
\end{proof}
In order to obtain a multiplicity result we need to introduce some notations.\\
We consider an orthonormal basis $(e_j)_{j\geq0}$ of $X^+$ and we set
\begin{equation*}
    X_k^-:=X^-\oplus\big(\oplus_{j=0}^k\mathbb{R}e_j\big) \text{ and } X_k^+:=\overline{\oplus_{j=k}^\infty\mathbb{R}e_j}.
\end{equation*}
\begin{theo}[Fountain theorem]\label{theo2}
Assume that $\Phi$ satisfies $(KS)$, that $\Phi$ is even, and that there exist $\rho_k>r_k>0$ such that:
\begin{equation*}
(A_1) \quad\quad a_k:=\sup_{\substack{u\in X_k^-\\ \|u\|=\rho_k}}\Phi(u)<\min\big(0,\inf_{\substack{u\in X\\ \|u\|\leq r_k}}\Phi(u)\big), \quad d_k:=\sup_{\substack{u\in X_k^-\\ \|u\|\leq\rho_k}}\Phi(u)<\infty.\quad\quad\quad
\end{equation*}
\begin{equation*}
 (A_2) \quad\quad\quad\quad\quad\quad\quad\quad\quad\quad   b_k:=\inf_{\substack{u\in X_k^+\\ \|u\|=r_k}}\Phi(u)\to\infty,\quad k\to\infty.\quad\quad\quad\quad\quad\quad\quad
\end{equation*}
Then, there exist a sequence $(u_k^n)_n\subset X$ and a number $c_k\geq b_k$ such that
\begin{equation*}
    \Phi(u_k^n)\to c_k\quad\text{and}\quad \big(1+\|u_k^n\|\big)\Phi'(u_k^n)\to0\,\,\text{as}\,\, n\to\infty.
\end{equation*}
Moreover, if $\Phi$ satisfies the $(Ce)_c$ condition for any $c>0$, then $\Phi$ has an unbounded sequence of critical values.
\end{theo}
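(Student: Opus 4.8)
The plan is to imitate, for each fixed $k$, the strategy of Theorem~\ref{theo1}: construct an $\inf$--$\sup$ minimax value $c_k$ over a class of odd admissible deformations of a ball in $X_k^-$, sandwich it by $b_k\le c_k\le d_k$, and then run the Deformation Lemma~\ref{dl} to produce a Cerami sequence at the level $c_k$. Set $B_k:=\{u\in X_k^-\,;\,\|u\|\le\rho_k\}$, with relative boundary $\partial B_k:=\{u\in X_k^-\,;\,\|u\|=\rho_k\}$, and $S_k^+:=\{u\in X_k^+\,;\,\|u\|=r_k\}$. Let $\Gamma_k$ be the family of maps $\gamma\in\mathcal C(B_k,X)$ that are $\tau$-continuous, odd, satisfy $\gamma|_{\partial B_k}=\mathrm{id}$, and enjoy the finite-dimensional perturbation property~(iv) of Lemma~\ref{dl}, so that the Kryszewski--Szulkin degree applies to them. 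Since $\mathrm{id}\in\Gamma_k$, the value
\begin{equation*}
c_k:=\inf_{\gamma\in\Gamma_k}\ \sup_{u\in B_k}\Phi(\gamma(u))
\end{equation*}
is finite and bounded above by $\sup_{B_k}\Phi=d_k$ by the second half of $(A_1)$.

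The decisive step is the symmetric linking inequality $c_k\ge b_k$, namely that every $\gamma\in\Gamma_k$ satisfies $\gamma(B_k)\cap S_k^+\neq\emptyset$; granting this, $\sup_{B_k}\Phi\circ\gamma\ge\inf_{S_k^+}\Phi=b_k$. Because $X^-$, and hence $X_k^-$, is infinite dimensional, this linking of the $\rho_k$-sphere of $X_k^-$ with the $r_k$-sphere of $X_k^+$ cannot be read off from the Brouwer degree and must be argued with the Kryszewski--Szulkin degree, exactly as $\mu=P^-\eta$ was treated in Theorem~\ref{theo1}. Writing $X=X_{k-1}^-\oplus X_k^+$ with $X_{k-1}^-:=(X_k^+)^\perp$ and letting $P_{k-1}$ be the orthogonal projection onto $X_{k-1}^-$, one has $\gamma(u)\in X_k^+\iff P_{k-1}\gamma(u)=0$; combining this with the norm constraint leads to an auxiliary admissible field on $B_k$ whose zeros are precisely the $u$ with $\gamma(u)\in S_k^+$, and whose $deg_{KS}$ over $int(B_k)$ is computed by the admissible homotopy shrinking $\gamma$ to $\mathrm{id}$ (which creates no zero on $\partial B_k$ because $r_k<\rho_k$). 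I expect this intersection lemma to be the main obstacle: one must verify $\tau$-continuity and admissibility of the auxiliary field, its nonvanishing on $\partial B_k$, and, crucially, exploit the oddness of $\gamma$ through the Borsuk--Ulam property of the degree to guarantee a nontrivial degree — it is precisely the $\mathbb{Z}_2$-symmetry that upgrades the saddle geometry of Theorem~\ref{theo1} to the fountain geometry here.

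Granting $b_k\le c_k\le d_k$, I would produce the Cerami sequence by contradiction. If no such sequence existed at the level $c_k$, there would be $\varepsilon>0$ with $(1+\|u\|)\|\Phi'(u)\|\ge 8\varepsilon$ on $\Phi^{-1}([c_k-2\varepsilon,c_k+2\varepsilon])$; shrinking $\varepsilon$ we may also assume $a_k<c_k-2\varepsilon$, which is legitimate since $(A_1)$ and $(A_2)$ give $a_k<0\le b_k\le c_k$. Apply Lemma~\ref{dl} with $b=d=c_k$ to obtain $\eta$, and choose $\gamma\in\Gamma_k$ with $\sup_{B_k}\Phi\circ\gamma<c_k+\varepsilon$, so that $\gamma(B_k)\subset\Phi^{c_k+\varepsilon}\subset\Phi^{d+2\varepsilon}$. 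Then $\widetilde\gamma:=\eta(1,\gamma(\cdot))$ again lies in $\Gamma_k$: it is odd by~(vi) of Lemma~\ref{dl}, admissible as a composition of admissible maps, and equal to $\mathrm{id}$ on $\partial B_k$, because there $\gamma(u)=u$ with $\Phi(u)\le a_k<c_k-2\varepsilon$, so $u$ lies in the region where the deforming vector field vanishes and is therefore left fixed by the flow. By~(ii) of Lemma~\ref{dl}, $\sup_{B_k}\Phi\circ\widetilde\gamma\le c_k-\varepsilon$, contradicting the definition of $c_k$. Hence, for each $k$, a sequence $(u_k^n)_n$ with $\Phi(u_k^n)\to c_k\ge b_k$ and $(1+\|u_k^n\|)\Phi'(u_k^n)\to0$ exists.

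For the final assertion, assume the $(Ce)_c$ condition for every $c>0$. By $(A_2)$ we have $b_k\to\infty$, so $c_k\ge b_k>0$ for all large $k$; for such $k$ the Cerami sequence at $c_k$ admits a convergent subsequence whose limit is a critical point of $\Phi$ at the level $c_k$. Since $c_k\ge b_k\to\infty$, the resulting critical values are unbounded, which is the claim.
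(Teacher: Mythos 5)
Your overall architecture (minimax over an odd, $\tau$-continuous, admissible class; intersection via the Kryszewski--Szulkin degree; deformation to produce a Cerami sequence) is the paper's, but your definition of $\Gamma_k$ introduces a genuine gap that breaks the decisive step. You require $\gamma|_{\partial B_k}=\mathrm{id}$, and in the deformation step you claim that $\widetilde\gamma:=\eta(1,\gamma(\cdot))$ still fixes $\partial B_k$ because there $\Phi(u)\le a_k<c_k-2\varepsilon$, ``so $u$ lies in the region where the deforming vector field vanishes.'' This is false for the deformation of Lemma \ref{dl}. In its construction, the zero vector is assigned only to those members $M_i$ of the refined covering that are contained in $\widetilde N=\Phi^{-1}\big(]-\infty,b-2\varepsilon[\big)$; a point $u$ with $\Phi(u)<b-2\varepsilon$ may perfectly well also lie in sets $M_i\subset N_v$ with $v$ in the strip $\Phi^{-1}([b-2\varepsilon,d+2\varepsilon])$ --- the $\tau$-neighborhoods $N_v$ cannot be confined to the strip, since $\Phi$ is only $\tau$-upper semicontinuous --- and for such indices $\lambda_i(u)v_i\neq 0$. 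Hence the field $f$ need not vanish on $\{u\,;\,\Phi(u)<b-2\varepsilon\}$, the flow can move points of $\partial B_k$, and indeed Lemma \ref{dl} asserts no ``points below the strip are fixed'' property. Nor can you force one by inserting the usual cutoff in $\Phi$ (as in the classical deformation lemma), because such a cutoff is only $\tau$-upper semicontinuous and would destroy the $\tau$-continuity and finite-dimensionality properties (iv)--(v) on which the degree argument relies. Consequently $\widetilde\gamma\notin\Gamma_k$ in general, your class is not invariant under the deformation, and the inequality $\sup_{B_k}\Phi\circ\widetilde\gamma\le c_k-\varepsilon$ contradicts nothing.

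The paper's proof is designed exactly to avoid this: its class $\Gamma_k$ imposes no boundary condition at all, but instead the monotonicity constraint (c) $\Phi(\gamma(u))\le\Phi(u)$ for all $u\in B_k$. This constraint is automatically preserved under composition with $\eta(1,\cdot)$, by property (iii) of Lemma \ref{dl}, so $\beta:=\eta(1,\gamma(\cdot))\in\Gamma_k$ and the contradiction goes through. Moreover (c) still yields the intersection property, which is where your boundary condition seemed needed: if $\|u\|=\rho_k$, then (c) and $(A_1)$ give $\Phi(\gamma(u))\le a_k<\inf_{\|v\|\le r_k}\Phi(v)$, hence $\|\gamma(u)\|>r_k$ and $u\notin U:=\{w\in B_k\,;\,\|\gamma(w)\|<r_k\}$; thus $U$ is a symmetric $\tau$-open neighborhood of $0$ in $X_k^-$ staying away from $\partial B_k$, and the Borsuk--Ulam type result (Theorem $5$ in \cite{B-C}) applied to $P_{k-1}\gamma$ on $\overline U$ produces $u_0\in\partial U$ with $P_{k-1}\gamma(u_0)=0$ and $\|\gamma(u_0)\|=r_k$, i.e.\ $\gamma(B_k)\cap\{u\in X_k^+\,;\,\|u\|=r_k\}\neq\emptyset$, whence $c_k\ge b_k$. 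So the repair is precisely to replace your condition $\gamma|_{\partial B_k}=\mathrm{id}$ by condition (c); with that single change, the remainder of your argument (the choice $\varepsilon<(c_k-a_k)/2$, the Cerami sequence at level $c_k$, and the unboundedness of critical values under $(Ce)_c$) matches the paper.
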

Theorem \ref{theo2} generalizes Theorem $12$ in \cite{B-C} where the sequence $(u_k^n)$ was a Palais-Smale sequence.
\begin{proof}
Let $B_k:=\big\{u\in X_k^-\,;\, \|u\|\leq\rho_k\big\}$ and let $\Gamma_k$ be the set of maps $\gamma:B_k\rightarrow X$ such that:
\begin{itemize}
  \item [(a)] $\gamma$ is odd and $\tau$-continuous,
  \item [(b)] each $u\in B_k$ has a $\tau$-neighborhood $\mathcal{N}_u$ in $X^-_k$ such that $(id-\gamma)\bigl(\mathcal{N}_u\cap B_k\bigr)$ is contained in a finite dimensional subspace of $X$,
  \item [(c)] $\Phi(\gamma(u))\leq\Phi(u)$, $\forall u\in B_k$.
\end{itemize}
Let $\gamma\in \Gamma_k$ and let $U=\{u\in B_k \, ; \, \|\gamma(u)\|<r_k\bigr.\}$. It obviously follows from $(a)$ and $(b)$ that $\gamma$ is continuous (in the norm topology). If $u\in B_k$ is such that $\|u\|=\rho_k$, then by $(c)$ we have $\Phi(\gamma(u))<\Phi(u)$, so it follows from $(A_1)$ that $u\notin U$. Hence $U$ is a symmetric $\tau-$open neighborhood of $0$ in $X_k^-$.  It is clear that $B_k$ is $\tau$-closed, so we deduce from the $\tau$-continuity of $\gamma$ that $\overline{U}$ is also $\tau$-closed. \\
  Consider the map 
  $$P_{k-1}\gamma:\overline{U}\rightarrow X^-_{k-1},$$
  where $P_{k-1}:X\to X^-_{k-1}$ is the orthogonal projection.
\begin{itemize}
\item $P_{k-1}\gamma$ is $\tau$-continuous. 
  \item Let $u\in U$. From $(b)$ $u$ has a $\tau$-neighborhood $N_u$ such that $(id-\gamma)(N_u\cap U)\subset W$, where $W$ is a finite-dimensional subspace of $X$. Let $v\in N_u\cap U\subset X^-_{k}=X^-_{k-1}\oplus {\mathbb R} e_k$, then $(id-P_{k-1}\gamma)(v)=P_{k-1}(v-\gamma(v))+\lambda e_k\in W+{\mathbb R} e_k$ which is finite-dimensional.
\end{itemize} 
It follows from Theorem $5$ in \cite{B-C} that there exists $u_0\in \partial U$ such that $P_{k-1}\gamma(u_0)=0$. This implies that
\begin{equation}\label{inter}
\gamma\big(B_k\big)\cap \big\{u\in X^+_k\,;\, \|u\|=r_k\big\}\neq\emptyset.
\end{equation}
Now $(A_1)$ and $(A_2)$ imply that there is $k_0$ big enough such that $b_k>a_k$ for every $k\geq k_0$. We fix $k\geq k_0$ and we define
\begin{equation*}
     c_{k} :=  \inf_{\gamma \in \Gamma_{k}} \sup_{u\in B_{k}} \Phi
\bigl( \gamma(u)  \bigr).
\end{equation*}
The intersection property \eqref{inter} implies that $c_k\geq b_k$.
\paragraph{} We assume by contradiction that there is no Cerami sequence of $\Phi$ at the level $c_k$. Then there exists $\varepsilon\in(0,\frac{c_k-a_k}{2})$ such that
\begin{equation*}
     u\in \Phi^{-1}([c_k-2\varepsilon,c_k+2\varepsilon])\quad\Rightarrow\quad \big(1+\|u\|\big)\|\Phi'(u)\|\geq 8\varepsilon,
\end{equation*}
where $\gamma\in \Gamma_k$ is such that
\begin{equation}\label{e.6}
    \sup_{B_{k}} \Phi \circ \gamma \leq c_{k}+\varepsilon.
\end{equation}
We apply Lemma \ref{dl} with $b=d=c_k$ and we define on $B_k$ the map
\begin{equation*}
    \beta(u):=\eta(1,\gamma(u)).
\end{equation*}
It follows from $(iii),(iv),(v),(vi)$ of Lemma \ref{dl} that $\beta\in\Gamma_k$.\\
Now by using $\eqref{e.6}$ and $(ii)$ of Lemma \ref{dl} we obtain
\begin{equation*}
  { \displaystyle \sup_{\substack{u \in X^-_k \\ \|u\| = \rho_k}} \Phi(\beta(u)) = \displaystyle \sup_{\substack{u \in X^-_k \\ \|u\| = \rho_k}} \Phi(\eta(1,\gamma(u))) \leq c_k-\varepsilon}
\end{equation*}
giving a contradiction with the definition of $c_k$. It then follows that $\Phi$ has a Cerami sequence at level $c_k$.
\end{proof}
Using the same argument as in the proof of Theorem $6$ in \cite{B-C1}, we can prove the following dual version of Theorem \ref{theo2}.
\begin{theo}[Dual fountain theorem]
Let $\Phi(u)=\frac{1}{2}\|u^+\|^2-\frac{1}{2}\|u^-\|^2+\Psi(u)$, where $\Psi\in\mathcal{C}^1(X,\mathbb{R})$ is even, bounded below, and weakly sequentially lower semicontinuous, and $\Psi'$ is weakly sequentially continuous.\\ If $\forall k\geq k_0$, $\exists\rho_k>r_k>0$ such that
\begin{itemize}
  \item [$(B_1)$]\,\,\,\  {$a^k \, := \, \displaystyle \inf_{\substack{u \in X^+_{k}\\ \|u\| = \rho_k}} \Phi(u) >\max\big(0, \sup_{\substack{u\in X\\ \|u\|\leq r_k}}\Phi(u)\big)$},\quad {$b^k \, := \, \displaystyle \sup_{\substack{u \in X^{-}_k\\ \|u \| = r_k}} \Phi(u) <0$,}
  \item [$(B_2)$]\,\,\,\ {$d^k \, := \, \displaystyle \inf_{\substack{u \in X^+_k\\ \|u \| \leq \rho_k}} \Phi(u) \rightarrow 0, \, k \rightarrow \infty$,}
\end{itemize}
Then there exist $c^k\in[d^{k},0[$ and a sequence $(u_k^n)\subset X$ such that
\begin{equation*}
\Phi(u_k^n)\to c_k\quad\text{and}\quad \big(1+\|u_n\|\big)\Phi'(u_k^n)\to0,\quad \text{as }n\to\infty.
\end{equation*}
Moreover, if $\Phi$ satisfies the $(Ce)_c$ condition for any $c\in [d^{k_0},0[$, then $\Phi$ has a sequence $(u_k)$ of critical points such that $\Phi(u_k)\to0^-$ as $k\to\infty$.
\end{theo}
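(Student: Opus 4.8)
The plan is to mirror the proof of Theorem~\ref{theo2}, interchanging its inf--sup scheme for the dual sup--inf one demanded by the present sign structure: the term $\Psi$ enters with a $+$ sign and the critical values we seek are small and \emph{negative}. Fix $k\geq k_0$, put $B_k:=\{u\in X_k^+\,;\,\|u\|\leq\rho_k\}$, and let $\Gamma_k$ be the class of maps $\gamma:B_k\to X$ that are odd, $\tau$-continuous, enjoy the finite-dimensional perturbation property (b) used in Theorem~\ref{theo2}, and satisfy the \emph{reversed} monotonicity $\Phi(\gamma(u))\geq\Phi(u)$ for every $u\in B_k$. The minimax value is then
\[
c^k:=\sup_{\gamma\in\Gamma_k}\ \inf_{u\in B_k}\Phi(\gamma(u)).
\]

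Next I would prove the two-sided estimate $d^k\leq c^k\leq b^k<0$. Since $\mathrm{id}\in\Gamma_k$ and $\inf_{B_k}\Phi=d^k$, the lower bound $c^k\geq d^k$ is immediate; as $d^k\to0^-$ this yields $c^k\to0^-$, and all these levels lie in the range $[d^{k_0},0)$ on which the Cerami condition is assumed. The upper bound rests on an intersection property dual to \eqref{inter}, namely that every $\gamma\in\Gamma_k$ satisfies
\[
\gamma(B_k)\cap\{u\in X_k^-\,;\,\|u\|=r_k\}\neq\emptyset.
\]
To obtain it I would argue as in Theorem~\ref{theo2}: assumption $(B_1)$ (precisely $a^k>\sup_{\|u\|\leq r_k}\Phi$) forces $\|\gamma(u)\|>r_k$ on $\partial B_k$, so that $U:=\{u\in B_k\,;\,\|\gamma(u)\|<r_k\}$ is a symmetric, bounded, $\tau$-open neighbourhood of $0$ in $X_k^+$ with $\overline U\subset\operatorname{int}B_k$; composing $\gamma$ with the orthogonal projection onto $X_{k+1}^+=(X_k^-)^\perp$ and invoking the Kryszewski--Szulkin degree (Theorem~$5$ in \cite{B-C}) yields $u_0\in\partial U$ whose image lies in $X_k^-$ with $\|\gamma(u_0)\|=r_k$. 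Then $\Phi(\gamma(u_0))\leq b^k$, whence $\inf_{B_k}\Phi\circ\gamma\leq b^k$ and $c^k\leq b^k<0$.

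The location of the minimax levels being settled, I would run the customary contradiction. If there is no Cerami sequence at $c^k$, there is $\varepsilon>0$ (small enough that $c^k+2\varepsilon<a^k$) for which the gradient bound \eqref{hypo} holds on $\Phi^{-1}([c^k-2\varepsilon,c^k+2\varepsilon])$, and one may select $\gamma\in\Gamma_k$ with $\inf_{B_k}\Phi\circ\gamma\geq c^k-\varepsilon$. The dual scheme calls for an \emph{ascending} deformation; I would produce it by applying Lemma~\ref{dl} to $-\Phi$, which again satisfies $(KS)$ once $X^+$ and $X^-$ are interchanged, since the functional subtracted off is still $\Psi$, bounded below and weakly sequentially lower semicontinuous. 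With $b=d=-c^k$ this furnishes $\eta$ along which $\Phi$ is non-decreasing and $\eta\big(1,\{\Phi\geq c^k-\varepsilon\}\big)\subset\{\Phi\geq c^k+\varepsilon\}$. Setting $\beta(u):=\eta(1,\gamma(u))$, properties (iii)--(vi) show $\beta\in\Gamma_k$, and $\inf_{B_k}\Phi\circ\beta\geq c^k+\varepsilon$ contradicts the definition of $c^k$ as a supremum. Hence a Cerami sequence exists at each level $c^k\in[d^{k_0},0)$, and the $(Ce)_c$ hypothesis converts these into critical points $u_k$ with $\Phi(u_k)=c^k\to0^-$.

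The step I expect to be the main obstacle is the dual intersection property: it is here that the Kryszewski--Szulkin degree must be re-engineered with the roles of $X^-$ and $X^+$ exchanged, for the ball $B_k$ now lies in a subspace of the space $X^+$ on which $\tau$ is the \emph{strong} topology, so the weak compactness that Theorem~\ref{theo2} exploited on $X^-$ is no longer available in the same guise. Pinning $c^k$ strictly below $b^k<0$ through this reversed degree argument, as carried out for Theorem~$6$ in \cite{B-C1}, is the delicate heart of the proof; once it is in place, the ascending deformation and the passage to critical points are routine adaptations of the steps already used for Theorem~\ref{theo2}.
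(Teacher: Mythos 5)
Your overall architecture (the sup--inf minimax over odd maps on the ball $B_k\subset X_k^+$, the intersection of $\gamma(B_k)$ with the small sphere in $X_k^-$, and an ascending deformation obtained from Lemma \ref{dl} applied to $-\Phi$) is indeed the skeleton of the argument the paper points to in \cite{B-C1}. The genuine gap is that you run half of the proof in the wrong topology, and this breaks both key steps. Interchanging $X^+$ and $X^-$ so that $-\Phi$ satisfies $(KS)$ does not merely permute the two subspaces: it replaces the Kryszewski--Szulkin topology $\tau$ (strong on $X^+$, weak on bounded subsets of $X^-$) by the mirrored topology, call it $\tau'$, built from an orthonormal basis of $X^+$ (strong on $X^-$, weak on bounded subsets of $X^+$). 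The deformation $\eta$ produced by Lemma \ref{dl} for $-\Phi$ is therefore $\tau'$-continuous and enjoys the finite-dimensional property (iv) with respect to $\tau'$-neighborhoods, not $\tau$-neighborhoods. Since your class $\Gamma_k$ is defined through $\tau$-continuity and the $\tau$-version of property (b), the composition $\beta=\eta(1,\gamma(\cdot))$ of a $\tau$-continuous map with a $\tau'$-continuous one need not belong to $\Gamma_k$; the assertion that ``properties (iii)--(vi) show $\beta\in\Gamma_k$'' is unjustified, and with it the contradiction against the definition of $c^k$ collapses.

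The same mismatch is what blocks your intersection property, which you correctly single out as the heart of the matter but never actually prove. On $X_k^+\subset X^+$ the topology $\tau$ coincides with the norm topology, so $\overline{U}$ is merely a norm-closed bounded subset of an infinite-dimensional space: it is not $\tau$-compact, the local finite-dimensionality (b) cannot be globalized into a compact perturbation of the identity, and Theorem 5 of \cite{B-C} --- a Borsuk-type theorem for the KS degree whose proof rests precisely on the $\tau$-compactness of bounded $\tau$-closed subsets of $X^-\oplus(\text{finite-dimensional part})$ --- simply does not apply there; nor is any Leray--Schauder substitute available. The repair, and the actual content of the reference to \cite{B-C1}, is to set up the entire proof in the reversed topology: define $\Gamma_k$ by $\tau'$-continuity and the $\tau'$-finite-dimensional property. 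Then $B_k$ and $\overline{U}$ are $\tau'$-compact (bounded, weakly closed subsets of $X^+$, on which $\tau'$ is of weak type), the mirror image of Theorem 5 in \cite{B-C} applies to $P_{k+1}^+\gamma$, where $P_{k+1}^+$ is the orthogonal projection onto $X_{k+1}^+=(X_k^-)^\perp$, and yields $u_0\in\partial U$ with $\gamma(u_0)\in X_k^-$ and $\|\gamma(u_0)\|=r_k$, hence $c^k\leq b^k<0$; and the deformation of Lemma \ref{dl} applied to $-\Phi$ is exactly $\tau'$-admissible, so that $\beta\in\Gamma_k$ and your final contradiction goes through. With that single but essential correction --- every occurrence of $\tau$ in your class and in your degree argument replaced by the mirrored topology $\tau'$ --- your proposal becomes the proof the paper has in mind.
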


\section{Periodic solutions of superquadratic Hamiltonian systems}\label{section3}
In this section, we apply our multiplicity result (Theorem \ref{theo2}) to the Hamiltonian system
 \begin{equation*}\tag{HS}
   \left\{
    \begin{array}{ll}
      \partial_tu-\Delta_x u+V(x)u=H_v(t,x,u,v)\,\, \textnormal{ in }\mathbb{R}\times\Omega, & \hbox{} \\
      -\partial_tv-\Delta_x v+V(x)v=H_u(t,x,u,v)\,\,\textnormal{ in }\mathbb{R}\times\Omega. & \hbox{}
      \end{array}
  \right.
\end{equation*}
We assume here that assumptions $(V_1)$, $(V_2)$, and $(H_1)$-$(H_4)$ are satisfied, and that $H$ is even in $z$.
\paragraph{} By $(V_1)$ the operator $S=-\Delta_x+V$ acting on $L^2(\Omega,\mathbb{R}^{2M})$ is self-adjoint with domain $D(S)=H^2(\Omega,\mathbb{R}^{2M})\cap H^1_0(\Omega,\mathbb{R}^{2M})$.\\
Let $L:=\mathcal{J}\partial_t+A$. Observing that the operators $A$ and $\mathcal{J}A$ acting on $L^2(\Omega,\mathbb{R}^{2M})$ with domains $D(A)=D(\mathcal{J}A)=H^2(\Omega,\mathbb{R}^{2M})\cap H^1_0(\Omega,\mathbb{R}^{2M})$ are self-adjoint, then the operator $L$ acting on $L^2\big(S^1,L^2(\Omega,\mathbb{R}^{2M})\big)$ is also self-adjoint. We know by \cite{B-D} that
\begin{equation*}
L^2\big(S^1,L^2(\Omega,\mathbb{R}^{2M})\big)\cong L^2\big(S^1\times\Omega,\mathbb{R}^{2M}\big),\quad 0\notin\sigma(A)\cup\sigma(\mathcal{J}A),\quad 0\notin\sigma(L),
\end{equation*}
so there is an orthogonal decomposition
\begin{equation*}
L^2(S^1\times\Omega,\mathbb{R}^{2M})=E^+\oplus E^-,
\end{equation*}
such that $L$ is negative on $E^-$, and positive on $E^+$.\\
The space $X=D(|L|^{\frac{1}{2}})$ is a Hilbert space with the inner product
\begin{equation*}
\big<w,z\big>=\big<|L|^{\frac{1}{2}}w,|L|^{\frac{1}{2}}z\big>_{L^2}
\end{equation*}
and norm $\|z\|^2=\big<z,z\big>$.\\
Moreover, we have an orthogonal decomposition
\begin{equation*}
    X=X^-\oplus X^+\text{ with } X^\pm=X\cap E^\pm.
\end{equation*}
We have the following useful lemma:
\begin{lem}[\cite{B-D}, Lemma $4.6$]\label{compa}
The embedding of $X$ in $L^r\big(S^1\times\Omega,\mathbb{R}^{2M}\big)$ is compact for $r\geq2$ if $N=1$, and for $r\in[2,2(N+2)/2)$ if $N\geq2$.
\end{lem}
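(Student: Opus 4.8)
The statement is defined purely through the self-adjoint operator $L=\mathcal{J}\partial_t+A$, so the plan is to diagonalize $L$ explicitly and then recognize $X=D(|L|^{1/2})$ as an anisotropic Sobolev space, where both the compactness into $L^2$ and the Sobolev embeddings are classical. First I would combine the Fourier basis $(e^{i\omega_n t})_{n\in\mathbb{Z}}$ of $L^2(S^1)$, with $\omega_n=2\pi n/T$, with an $L^2(\Omega)$-orthonormal basis $(e_k)_k$ of Dirichlet eigenfunctions of $S=-\Delta_x+V$, with eigenvalues $\mu_k$; by $(V_2)$ one has $\mu_k\neq0$, and $|\mu_k|\to\infty$ since $\Omega$ is bounded. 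Writing $Lz=\mathcal{J}\partial_t z+\mathcal{J}_0 S z$ in these coordinates, the eigenvalue problem $Lz=\lambda z$ decouples on each $(n,k)$-mode into $M$ copies of a $2\times2$ block whose off-diagonal entries are $\mu_k+i\omega_n$ and $\mu_k-i\omega_n$, hence with eigenvalues $\lambda=\pm\sqrt{\omega_n^2+\mu_k^2}$. This recovers $0\notin\sigma(L)$ and, more importantly, shows that the eigenvalues of $|L|$ are $\sqrt{\omega_n^2+\mu_k^2}$, which tend to $+\infty$.

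Two consequences follow. Writing $z=\sum_{n,k}c_{nk}\phi_{nk}$ in the corresponding eigenbasis, $X$ is exactly the set of $z$ with $\|z\|_X^2:=\sum_{n,k}\sqrt{\omega_n^2+\mu_k^2}\,|c_{nk}|^2<\infty$. Because the eigenvalues diverge, $L$ has compact resolvent and $X\hookrightarrow L^2(S^1\times\Omega,\mathbb{R}^{2M})$ is \emph{compact}: if $P_\Lambda$ truncates to the modes with $\sqrt{\omega_n^2+\mu_k^2}\leq\Lambda$, then $\|z-P_\Lambda z\|_{L^2}^2\leq\Lambda^{-1}\|z\|_X^2$, so the embedding is a norm-limit of finite-rank maps. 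Next, using $0\notin\sigma(L)$ there is a gap $\sqrt{\omega_n^2+\mu_k^2}\geq\delta>0$, whence $c(1+|\omega_n|+|\mu_k|)\leq\sqrt{\omega_n^2+\mu_k^2}\leq C(1+|\omega_n|+|\mu_k|)$; this identifies $X$, with an equivalent norm, with the anisotropic space $\mathcal{W}:=H^{1/2}(S^1;L^2(\Omega))\cap L^2(S^1;H_0^1(\Omega))$, i.e. half a derivative in $t$ and one in $x$.

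It then remains to embed $\mathcal{W}$ into $L^r$. Parabolic scaling $(t,x)\mapsto(\lambda^2 t,\lambda x)$ identifies the critical exponent as $r^\ast=2(N+2)/N$. For $N\geq2$ and any $r<r^\ast$ I would obtain the continuous embedding $\mathcal{W}\hookrightarrow L^r(S^1\times\Omega)$ by interpolating $L^2(S^1;H_0^1)$ with $H^{1/2}(S^1;L^2)$: the space at interpolation parameter $\theta$ is $H^{\theta/2}(S^1;H^{1-\theta}(\Omega))$, and, for a suitable choice of $\theta$, the strictly subcritical Sobolev embeddings $H^{1-\theta}(\Omega)\hookrightarrow L^{p}(\Omega)$ and $H^{\theta/2}(S^1)\hookrightarrow L^{q}(S^1)$ with $p,q>r$, followed by Hölder on the finite-measure set $S^1\times\Omega$, give $H^{\theta/2}(S^1;H^{1-\theta}(\Omega))\hookrightarrow L^q(S^1;L^p(\Omega))\hookrightarrow L^r$. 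When $N=1$ the space embedding improves to $H_0^1(\Omega)\hookrightarrow L^\infty(\Omega)$, so $\mathcal{W}\hookrightarrow H^{1/2}(S^1;L^\infty(\Omega))$ and $H^{1/2}(S^1)\hookrightarrow L^r(S^1)$ for every finite $r$, yielding $\mathcal{W}\hookrightarrow L^r$ for all $r\in[2,\infty)$. To pass from continuity to the asserted \emph{compactness} at a subcritical $r$, fix $r_1\in(r,r^\ast)$ (or any $r_1\in(r,\infty)$ if $N=1$): a bounded sequence in $X$ is bounded in $L^{r_1}$ and, by the compact embedding above, has a subsequence converging in $L^2$; the interpolation inequality $\|w\|_r\leq\|w\|_2^{1-\kappa}\|w\|_{r_1}^{\kappa}$ then makes that subsequence Cauchy in $L^r$.

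The main obstacle is the anisotropic, mixed-norm analysis of the third paragraph: correctly identifying $X$ with $\mathcal{W}$ and establishing the sharp Sobolev embedding require the space--time interpolation above, whereas the $L^2$-compactness and the final interpolation step are routine once the eigenvalues are known to diverge. I note that the exponent as printed, $2(N+2)/2$, should read $2(N+2)/N$, which is precisely the parabolic critical exponent delivered by the scaling $(t,x)\mapsto(\lambda^2 t,\lambda x)$.
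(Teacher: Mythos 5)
The paper offers no proof of this lemma at all: Lemma \ref{compa} is simply quoted (with a misprint) from \cite{B-D}, so your argument has to stand on its own. For $N\geq 2$ it essentially does. The block diagonalization of $L$ on the $(n,k)$-modes (eigenvalues $\pm\sqrt{\omega_n^2+\mu_k^2}$, each of multiplicity $M$), the compactness of $X\hookrightarrow L^2$ via finite-rank truncation, the norm equivalence $\|z\|_X^2\asymp\sum_{n,k}\bigl(1+|\omega_n|+|\mu_k|\bigr)|c_{nk}|^2$ (whose lower bound indeed needs $0\notin\sigma(S)$), the passage to $H^{\theta/2}\bigl(S^1;H^{1-\theta}(\Omega)\bigr)$ (which you can get directly from Young's inequality on the spectral weights, with no abstract interpolation machinery), and the final compactness upgrade by interpolating the compact $L^2$ embedding against a bounded $L^{r_1}$ embedding are all sound; the exponent arithmetic, $2\sigma\leq\theta\leq 1-N\sigma$ with $\sigma=\tfrac12-\tfrac1r$, is solvable exactly when $r\leq 2(N+2)/N$, so you do reach the whole stated subcritical range. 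You are also right that $2(N+2)/2$ is a misprint for $2(N+2)/N$; the paper itself uses $2(N+2)/N$ in the proof of Lemma \ref{regular}.

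The genuine gap is your case $N=1$. The step ``$\mathcal{W}\hookrightarrow H^{1/2}\bigl(S^1;L^\infty(\Omega)\bigr)$'' is false: membership in $\mathcal{W}$ gives half a time derivative only against $L^2_x$ values and a full space derivative only against $L^2_t$; in Fourier coefficients you would need $\sum_n(1+|n|)\|z_n\|_{L^\infty(\Omega)}^2<\infty$, whereas Gagliardo--Nirenberg gives only $\|z_n\|_\infty^2\lesssim\|z_n\|_{L^2}\|z_n\|_{H^1}$, and Cauchy--Schwarz then requires control of $\sum_n(1+|n|)^2\|z_n\|_{L^2}^2$, which the space does not provide. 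Worse, the step cannot be repaired by any other route, because for $N=1$ the embedding genuinely fails beyond the parabolic exponent $6=2(N+2)/N$: take $\Omega=(0,\pi)$, $T=2\pi$, and $z_\Lambda(t,x)=D_{\Lambda^2}(t)\sum_{k=1}^{\Lambda}\sin(kx)$, where $D_m$ is the Dirichlet kernel. Then $\|z_\Lambda\|_X^2\asymp\sum_{|n|\leq\Lambda^2}\sum_{k\leq\Lambda}\sqrt{n^2+k^4}\asymp\Lambda^5$, while $|z_\Lambda|\gtrsim\Lambda^3$ on a set of measure $\gtrsim\Lambda^{-3}$, so that $\|z_\Lambda\|_{L^r}/\|z_\Lambda\|_X\gtrsim\Lambda^{1/2-3/r}\to\infty$ whenever $r>6$. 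So what your (correct) interpolation scheme proves is the sharp statement, compactness for $r\in\bigl[2,2(N+2)/N\bigr)$ in every dimension $N\geq1$, and the clause ``$r\geq2$ if $N=1$,'' which the paper inherits from the citation chain, is incompatible with parabolic scaling (it looks like a conflation with the ODE case of no space variable, where the space is $H^{1/2}(S^1)$ and all finite $r$ do work). This is not cosmetic for the paper either: when $N=1$, assumption $(H_4)$ allows any $\sigma>1$, and the proof of Lemma \ref{regular} needs the embedding into $L^{p+1}$ with $p+1=2\sigma/(\sigma-1)$, which exceeds $6$ as soon as $\sigma<3/2$.
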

\paragraph{} Let's define the functional $\Phi:X\to\mathbb{R}$,
\begin{equation}\label{phii}
\Phi(z)=\frac{1}{2}\|z^+\|^2-\frac{1}{2}\|z^-\|^2-\Psi(z), \,\text{ where }\,\Psi(z)=\int_\Theta H(t,x,z),
\end{equation}
which is such that its critical points are also weak solutions of \eqref{h}.
\begin{lem}\label{regular}\qquad
\begin{enumerate}
\item $\Phi\in\mathcal{C}^1(X,\mathbb{R})$.
\item $\Psi$ is bounded below and is weakly sequentially lower semicontinuous.
\item $\Psi'$ is weakly sequentially continuous.
\end{enumerate}
\end{lem}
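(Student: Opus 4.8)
The plan is to reduce the three assertions to standard facts about Nemytskii (superposition) operators acting between subcritical Lebesgue spaces, together with the compact embeddings of Lemma~\ref{compa}. The only point that is not completely routine is the derivation of a \emph{subcritical polynomial growth bound} for $H$ and $H_z$; since we do not assume \eqref{ar1}, this bound has to be extracted from $(H_4)$, and this is where the whole argument hinges.

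First I would establish the growth estimate. From $(H_3)$ we have $H\ge0$, so
\begin{equation*}
\widetilde H(t,x,z)=\tfrac12 H_z(t,x,z)\cdot z-H(t,x,z)\le \tfrac12|H_z(t,x,z)||z|.
\end{equation*}
Feeding this into $(H_4)(2)$ gives, for $|z|\ge R$,
\begin{equation*}
\Big(\frac{|H_z(t,x,z)|}{|z|}\Big)^{\sigma}\le a_2\widetilde H(t,x,z)\le \frac{a_2}{2}|H_z(t,x,z)||z|,
\end{equation*}
whence $|H_z(t,x,z)|^{\sigma-1}\le \tfrac{a_2}{2}|z|^{\sigma+1}$ and therefore $|H_z(t,x,z)|\le C|z|^{p-1}$ with $p-1=\frac{\sigma+1}{\sigma-1}$, i.e. $p=\frac{2\sigma}{\sigma-1}$. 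Combining this with the uniform bound $|H_z|\le\varepsilon|z|$ near $z=0$ coming from $(H_2)$ and with the boundedness of $H_z$ on the compact middle region $\{\,\delta\le|z|\le R\,\}$ (continuity from $(H_1)$ and compactness of $\overline\Theta$), I obtain a uniform bound $|H_z(t,x,z)|\le C(1+|z|^{p-1})$ for all $(t,x,z)$, and, integrating along segments with $H(t,x,0)=0$, $|H(t,x,z)|\le C(1+|z|^{p})$. The choice of $\sigma$ is exactly what makes $p$ subcritical: since $\sigma>\frac N2+1$ (resp. $\sigma>1$ if $N=1$) one checks that $p=2+\frac{2}{\sigma-1}$ lies in the admissible range of Lemma~\ref{compa}, so that $X\hookrightarrow L^{p}(\Theta,\mathbb{R}^{2M})$ \emph{compactly}.

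With the subcritical growth in hand the three claims follow from the standard Nemytskii/Sobolev machinery. The bound $|H|\le C(1+|z|^p)$ and the continuous embedding $X\hookrightarrow L^p$ show that $\Psi$ is finite on $X$; the estimate $|H_z|\le C(1+|z|^{p-1})$ gives that $z\mapsto H_z(\cdot,z)$ is continuous and bounded from $L^{p}$ to $L^{p'}$ with $p'=p/(p-1)$ (since $(p-1)p'=p$), and the classical differentiability theorem for such functionals yields $\Psi\in C^1$ with $\langle\Psi'(z),\phi\rangle=\int_\Theta H_z(t,x,z)\cdot\phi$; as the quadratic part of $\Phi$ is smooth, item~(1) follows. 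For item~(2), $H\ge0$ gives $\Psi\ge0$, so $\Psi$ is bounded below; moreover a compact embedding of a reflexive space is completely continuous, so $z_n\rightharpoonup z$ in $X$ forces $z_n\to z$ in $L^p$, and continuity of $H\colon L^p\to L^1$ then gives $\Psi(z_n)\to\Psi(z)$, in particular weak sequential lower semicontinuity. For item~(3), the same complete continuity gives $z_n\to z$ in $L^p$, continuity of $H_z\colon L^p\to L^{p'}$ gives $H_z(\cdot,z_n)\to H_z(\cdot,z)$ in $L^{p'}$, and hence for every fixed $\phi\in X\hookrightarrow L^p$ one gets $\langle\Psi'(z_n),\phi\rangle\to\langle\Psi'(z),\phi\rangle$, i.e. $\Psi'$ is weakly sequentially continuous. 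The main obstacle is entirely in the first step: without \eqref{ar1} one must squeeze the growth bound out of the coupled inequality $(H_4)(2)$ using only $H\ge0$, and then verify that the resulting exponent $p=2\sigma/(\sigma-1)$ stays strictly below the critical threshold of the anisotropic (parabolic) embedding in Lemma~\ref{compa}.
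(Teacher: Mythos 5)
Your proposal is correct and follows essentially the same route as the paper: the key step in both is to combine $H\ge 0$ with $(H_4)(2)$ to get $|H_z|^{\sigma-1}\le \tfrac{a_2}{2}|z|^{\sigma+1}$, hence the subcritical bound $|H_z|\le C(1+|z|^{\frac{\sigma+1}{\sigma-1}})$, whose exponent is admissible for Lemma~\ref{compa} precisely because $\sigma>\tfrac N2+1$ (resp.\ $\sigma>1$), after which all three items are standard Nemytskii/compact-embedding arguments. The only cosmetic differences are your normalization of the exponent ($p=\tfrac{2\sigma}{\sigma-1}$ versus the paper's $p+1$) and your use of full weak sequential continuity of $\Psi$ in item (2) where the paper invokes Fatou's lemma; both are valid.
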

\begin{proof}
\begin{enumerate}
  \item By $(H_4)$-$(2)$ we have  for $|z|\geq R$ and for $(t,x)\in \Theta$
\begin{align*}
 |H_z(t,x,z)|^\sigma &\leq a_2\widetilde{H}(t,x,z)|z|^\sigma\\
 & \leq a_2\Big(\frac{1}{2}H_z(t,x,z)z-H(t,x,z)\Big)|z|^\sigma\\
&\leq \frac{a_2}{2}|H_z(t,x,z)||z|^{\sigma+1}.
\end{align*}
Hence
\begin{equation*}
|H_z(t,x,z)|\leq a_3|z|^{\frac{\sigma+1}{\sigma-1}} \quad \forall |z|\geq R,\quad\forall (t,x)\in\Theta,
\end{equation*}
where $a_3>0$ is constant. \\
On the other hand, since $H_z$ is continuous, we have $|H_z(t,x,z)|\leq a_4$, for $|z|\leq R$ and $(t,x)\in \Theta$. We deduce that
\begin{equation}\label{un}
|H_z(t,x,z)|\leq a_4+a_3|z|^{\frac{\sigma+1}{\sigma-1}} \quad \forall z\in \mathbb{R}^{2M},\quad\forall (t,x)\in\Theta.
\end{equation}
Let $\varepsilon>0$. $(H_2)$ implies that there is $\delta>0$ such that
\begin{equation*}
|H_z(t,x,z)|\leq\varepsilon|z|\quad\text{pour tout }|z|<\delta.
\end{equation*}
If $|z|\geq\delta$, then we have in view of \eqref{un}
\begin{equation*}
|H_z(t,x,z)|\leq a_4+a_3|z|^{\frac{\sigma+1}{\sigma-1}}\leq a_4\frac{|z|^p}{\delta^p}+a_3|z|^p=\big(\frac{a_4}{\delta^p}+a_3\big)|z|^p, 
\end{equation*}
with  $p=\frac{\sigma+1}{\sigma-1}$. Hence
\begin{equation}\label{deux}
\forall\varepsilon>0,\,\,\exists C(\varepsilon)>0\,\,;\,\, |H_z(t,x,z)|\leq \varepsilon|z|+C(\varepsilon)|z|^{p}\quad\forall z\in\mathbb{R}^{2M}.
\end{equation}
Now one can easily verify that
\begin{equation*}
\left\{
  \begin{array}{ll}
    \sigma>1\text{ if }N=1 & \hbox{} \\
    \sigma>1+\frac{N}{2}\text{ if }N\geq2 & \hbox{}
  \end{array}
\right.
 \Longrightarrow
\left\{
  \begin{array}{ll}
    p+1>2\text{ si }N=1 & \hbox{} \\
    2<p+1<2(N+2)/N)\text{ if }N\geq2. & \hbox{}
  \end{array}
\right.
\end{equation*}
It then follows from Lemma \ref{compa} that $\Psi$ is well defined on $X$. We can now use a standard argument to show that $\Phi$ is of class $C^1$ on $X$ and
\begin{equation}\label{phiprim}
\big<\Phi'(z),w\big>=\big<z^+-z^-,w\big>-\int_\Theta wH_z(t,x,z),\quad\forall z,w\in X.
\end{equation}
  \item Since $H\geq0$, the functional $\Psi$ is bounded below. Let $z_n\rightharpoonup z$ and $c\in\mathbb{R}$ such that $\Psi(z_n)\leq c$. By Lemma \ref{compa} we have $z_n\to z$ in $L^2$ and, up to a subsequence, $z_n(t,x)\to z(x,t)$ a.e. in $\Theta$. Using Fatou's lemma we obtain $\Psi(z)\leq c$, which shows that $\psi$ is weakly lower semicontinuous.
  \item Let $z_n\rightharpoonup z$. By Lemma \ref{compa} we have $z_n\to z$ in $L^{p+1}$ \big(where $p=\frac{\sigma+1}{\sigma-1}$\big) which implies, using \eqref{deux} and Theorem $A.2$ in \cite{W}, that $|H(t,x,z_n)-H(t,x,z)|_{\frac{p+1}{p}}\to0$. It is then easy to deduce, using Fatou's lemma, that $\Psi'$ is weakly sequentially continuous.
\end{enumerate}
\end{proof}
We now prove that the functional $\Phi$ satisfies the Cerami condition.
\begin{lem}\label{cerami}
Let $(z_n)\subset X$ be such that
\begin{equation*}
d=\sup\Phi(z_n)<\infty\quad\text{and}\quad \big(1+\|z_n\|\big)\Phi'(z_n)\to0.
\end{equation*}
Then $(z_n)$ admits a convergent subsequence.
\end{lem}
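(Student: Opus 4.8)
The plan is to split the proof into two parts: first establish that the Cerami sequence $(z_n)$ is bounded in $X$, which is the real difficulty since the Ambrosetti--Rabinowitz condition is unavailable, and then deduce strong convergence from boundedness by a by-now standard argument adapted to the $(KS)$ structure. The starting point for both parts is to extract quantitative information from the two hypotheses. Testing $\Phi'(z_n)$ against $z_n$ in \eqref{phiprim} gives $\langle \Phi'(z_n),z_n\rangle = \|z_n^+\|^2-\|z_n^-\|^2-\int_\Theta z_nH_z(t,x,z_n)$, and since $(1+\|z_n\|)\Phi'(z_n)\to0$ forces $\langle\Phi'(z_n),z_n\rangle\to0$, I would form the combination $\Phi(z_n)-\tfrac12\langle\Phi'(z_n),z_n\rangle=\int_\Theta \widetilde H(t,x,z_n)$, which is therefore bounded above by $d+o(1)$.

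For boundedness, I would argue by contradiction, assuming $\|z_n\|\to\infty$ along a subsequence. The bound on $\int_\Theta\widetilde H(t,x,z_n)$ combined with $(H_4)$-$(1)$ (and the fact that $\widetilde H$ is bounded on the compact region $|z|\le R$) yields $\int_{\{|z_n|\ge R\}}a_1|z_n|^2\le\int_\Theta\widetilde H+C\le C'$, hence $|z_n|_2$ is \emph{bounded}, an $L^2$ estimate that is the crucial gain. Next, testing $\Phi'(z_n)$ against $z_n^+-z_n^-$ and using orthogonality gives $\|z_n\|^2=\int_\Theta(z_n^+-z_n^-)H_z(t,x,z_n)+o(1)$. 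On $\{|z_n|<R\}$ the integrand is controlled by the $L^2$ bound since $|H_z|\le a_4$ there; on $\{|z_n|\ge R\}$ I would insert $(H_4)$-$(2)$ in the form $|H_z(t,x,z_n)|\le|z_n|\,(a_2\widetilde H(t,x,z_n))^{1/\sigma}$ and apply H\"older with exponents $\sigma$ and $\sigma'=\sigma/(\sigma-1)$, so that the factor $(a_2\widetilde H)^{1/\sigma}$ stays bounded in $L^\sigma$ and it remains to control $|(z_n^+-z_n^-)z_n|_{\sigma'}\le|z_n^+-z_n^-|_{2\sigma'}\,|z_n|_{2\sigma'}$. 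Here I would interpolate $L^{2\sigma'}$ between $L^2$ and a subcritical exponent $s<2(N+2)/N$; the condition $\sigma>N/2+1$ (respectively $\sigma>1$ for $N=1$) is exactly what guarantees $2\sigma'<s$ for admissible $s$, so by Lemma \ref{compa} and the bounded $L^2$ norm each factor is $\le C\|z_n\|^{\lambda}$ with $\lambda<1$. This produces an inequality of the type $\|z_n\|^2\le C_1+C_2\|z_n\|^{2\lambda}+o(1)$ with $2\lambda<2$, which contradicts $\|z_n\|\to\infty$; this interpolation estimate, and verifying that $\sigma>N/2+1$ makes $\lambda<1$, is the main obstacle.

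Once $(z_n)$ is bounded, I would pass to a subsequence with $z_n\rightharpoonup z$ in $X$, so that $z_n^\pm\rightharpoonup z^\pm$ and, by the compact embedding of Lemma \ref{compa}, $z_n\to z$ and $z_n^\pm\to z^\pm$ in $L^{p+1}$, while $\Phi'(z_n)\to0$. Testing $\Phi'(z_n)$ against $z_n^+-z^+$ and using orthogonality of the decomposition, the linear part equals $\langle z_n^+,z_n^+-z^+\rangle$, whereas the nonlinear term $\int_\Theta(z_n^+-z^+)H_z(t,x,z_n)$ tends to $0$ because $H_z(t,x,z_n)\to H_z(t,x,z)$ in $L^{(p+1)/p}$ (as in the proof of Lemma \ref{regular}) while $z_n^+-z^+\to0$ in $L^{p+1}$; hence $\|z_n^+\|^2\to\langle z^+,z^+\rangle=\|z^+\|^2$, which together with $z_n^+\rightharpoonup z^+$ forces $z_n^+\to z^+$ strongly. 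The same computation with $z_n^--z^-$ gives $z_n^-\to z^-$, and therefore $z_n\to z$ in $X$, completing the proof.
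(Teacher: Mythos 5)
Your proposal is correct and takes essentially the same route as the paper's proof: the same combination $\Phi(z_n)-\tfrac12\langle\Phi'(z_n),z_n\rangle=\int_\Theta\widetilde{H}(t,x,z_n)$ with $(H_4)$-$(1)$ to obtain the $L^2$ bound, the same test vector $z_n^+-z_n^-$, and the same H\"older-plus-interpolation estimate driven by $(H_4)$-$(2)$ and the subcriticality $2\sigma'<2(N+2)/N$, followed by the standard compactness argument. The only differences are presentational: the paper normalizes ($w_n=z_n/\|z_n\|$) and splits the domain into three regions using $(H_2)$, whereas you work unnormalized (reaching $\|z_n\|^2\le C_1+C_2\|z_n\|^{2\lambda}$, $\lambda<1$) and handle the region $\{|z_n|<R\}$ directly by the sup bound on $|H_z|$.
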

\begin{proof}
Let us first show that the sequence $(z_n)$ is bounded.\\
Arguing by contradiction we suppose that $(z_n)$ is  unbounded. Then, up to a subsequence, we have $\|z_n\|\to\infty$.\\
Observe that
\begin{align*}
\big<\Phi'(z_n),z_n^+-z_n^-\big>&=\|z_n^+\|^2+\|z_n^-\|^2-\int_\Theta (z_n^+-z_n^-)\cdot H_z(t,x,z_n)\\
&=\|z_n\|^2\Big(1-\int_\Theta\frac{ (z_n^+-z_n^-)\cdot H_z(t,x,z_n)}{\|z_n\|^2}\Big).
\end{align*}
Since
\begin{equation*}
|\big<\Phi'(z_n),z_n^+-z_n^-\big>|\leq\|\Phi'(z_n)\|\|z_n^+-z_n^-\|\leq2\|z_n\|\|\Phi'(z_n)\|\to0,
\end{equation*}
one must have
\begin{equation}\label{etoile}
\int_\Theta\frac{ (z_n^+-z_n^-)\cdot H_z(t,x,z_n)}{\|z_n\|^2}\to1.
\end{equation}
 Set $w_n=z_n/\|z_n\|$.\\
Assumption $(H_4)$-$(1)$ implies that $\widetilde{H}(t,x,z)\geq a_1|z|^2-c_1$ for all $(t,x,z)$. We deduce that for $n$ big enough one has
\begin{align}
1+d\geq\Phi(z_n)-\frac{1}{2}\big<\Phi'(z_n),z_n\big>&=\int_\Theta \widetilde{H}(t,x,z_n)\label{trois}\\
&\geq a_2|z_n|_2^2-c_1|\Theta|,\label{quatre}
\end{align}
where $|\Theta|$ denotes the Lebesgue  measure of $\Theta$. \eqref{quatre} shows that $(|z_n|_2)$ is bounded. Hence
\begin{equation}\label{cinq}
|w_n|_2=\frac{|z_n|_2}{\|z_n\|}\to0.
\end{equation}
On the other hand we have
\begin{align*}
\Big|\int_\Theta\frac{(z_n^+-z_n^-)\cdot H_z(t,x,z_n)}{\|z_n\|^2}\Big|&\leq \int_\Theta\frac{|z_n^+-z_n^-|}{\|z_n\|}\frac{|H_z(t,x,z_n|}{\|z_n\|}\\
&=\int_\Theta|w_n^+-w_n^-||w_n|\frac{|H_z(t,x,z_n|}{|z_n|}\\
\leq\Big(\int_\Theta &\big(\frac{|H_z(t,x,z_n|}{|z_n|}\big)^\sigma\Big)^{1/\sigma}\Big(\int_\Theta \big(|w_n^+-w_n^-||w_n|\big)^{\sigma'}\Big)^{1/\sigma'},
\end{align*}
with $\frac{1}{\sigma}+\frac{1}{\sigma'}=1.$
Since
\begin{equation*}
    |w_n^+-w_n^-||w_n|=|w_n^+-w_n^-||w_n^++w_n^-|\leq(|w_n^+|+|w_n^-|)^2\leq 2(|w_n^+|^2+|w_n^-|^2),
\end{equation*}
we have
\begin{equation*}
\int_\Theta \big(|w_n^+-w_n^-||w_n|\big)^{\sigma'}\leq c_2\Big(|w_n^+|_{2\sigma'}^{2\sigma'}+|w_n^-|_{2\sigma'}^{2\sigma'}\big).
\end{equation*}
By using the fact that $2\sigma'=p+1\in[2,2(N+2)/N]$, we have
\begin{equation*}
|w_n^\pm|_{2\sigma'}\leq|w_n^\pm|_2^\alpha|w_n^\pm|_{2(N+2)/N}^{1-\alpha}\quad \big(\text{interpolation inequality}\big),
\end{equation*}
where $\alpha\in[0,1]$ is such that $\frac{\alpha}{2}+\frac{1-\alpha}{2(N+2)/N}=1$. Since $X$ continuously embeds in $L^{2(N+2)/N}$, we have $|w_n^\pm|_{2(N+2)/N}\leq c_3\|w_n^\pm\|\leq c_3$ (because $\|w_n^\pm\|\leq\|w_n\|=1$).\\
Now, since the decomposition $X=X^-\oplus X^+$ is also orthogonal with respect to the $L^2$-norm, we have $|w_n^\pm|_2\leq|w_n|_2$. Hence $|w_n^\pm|_{2\sigma'}\leq c_3|w_n|_2^\alpha$, which implies
\begin{equation}\label{six}
\Big|\int_\Theta\frac{(z_n^+-z_n^-)\cdot H_z(t,x,z_n)}{\|z_n\|^2}\Big|\leq c_4\Big(\int_\Theta \Big(\frac{|H_z(t,x,z_n|}{|z_n|}\Big)^\sigma\Big)^{1/\sigma}|w_n|_2^{2\alpha}.
\end{equation}
Let $\varepsilon>0$. By $(H_2)$ there exists $\delta>0$ such that $|H_z(t,x,z)|\leq\varepsilon|z|$ for $|z|<\delta$. Then
\begin{align*}
\int_\Theta \Big(\frac{|H_z(t,x,z_n|}{|z_n|}\Big)^\sigma&=\int_{\Theta\cap\{|z_n|<\delta\}} \Big(\frac{|H_z|}{|z_n|}\Big)^\sigma+\int_{\Theta\cap\{\delta\leq|z_n|\leq R\}} \Big(\frac{|H_z|}{|z_n|}\Big)^\sigma\\
& \qquad \qquad\qquad+\int_{\Theta\cap\{|z_n|\geq R\}} \Big(\frac{|H_z|}{|z_n|}\Big)^\sigma\\
\leq |\Theta|\varepsilon^\sigma+&\sup_{\substack{\Theta\cap\{\delta\leq|z_n|\leq R\}}}\Big(\frac{|H_z|}{|z_n|}\Big)^\sigma|\Theta|+a_2\int_{\Theta\cap\{|z_n|\geq R\}}\widetilde{H}(t,x,z_n),
\end{align*}
where we have used $(H_4)$-$(2)$ for the last term in the RHS of the equality. We deduce from \eqref{trois} that
$\Big(\int_{\Theta\cap\{|z_n|\geq R\}}\widetilde{H}(t,x,z_n)\Big)$ is bounded. Hence $\Big(\int_\Theta \big(\frac{|H_z(t,x,z_n|}{|z_n|}\big)^\sigma\Big)$ is bounded. It then follows from \eqref{six} and \eqref{cinq} that
\begin{equation*}
\int_\Theta\frac{(z_n^+-z_n^-)\cdot H_z(t,x,z_n)}{\|z_n\|^2}\to0,
\end{equation*}
which contradicts \eqref{etoile}. This contradiction leads to the conclusion that the sequence $(z_n)$ is bounded.
\paragraph{} We may now suppose that $z_n\rightharpoonup z$ in $X$. Since
\begin{equation*}
\|z_n^\pm-z^\pm\|^2=\pm\big<\Phi'(z_n)-\Phi'(z),z_n^\pm-z^\pm\big>\pm\int_\Theta(z_n^\pm-z^\pm)\cdot H_z(t,x,z_n),
\end{equation*}
a standard argument using Lemma \ref{compa} shows that $z_n\to z$.
\end{proof}

\begin{proof}[Proof of Theorem \ref{resultat}]
Let $(e_i)$ be an orthonormal basis of $X^+$. We recall that
\begin{equation*}
    X^-_k=X^-\oplus\big(\oplus_{j=0}^k\mathbb{R}e_j\big)\text{ and }X^+_k=\overline{\oplus_{j=k}^\infty\mathbb{R}e_j}.
\end{equation*}

 Let $z\in X^-_k$. Using $(H_3)$ we have
\begin{equation*}
\forall \delta>0\quad\exists c_\delta>0\,\,;\,\ H(t,x,z) \geq\delta|z|^2-c_\delta.
\end{equation*}
Then
\begin{equation*}
\Phi(z)=\frac{1}{2}\|z^+\|^2-\frac{1}{2}\|z^-\|^2-\int_\Theta H(t,x,z)\leq \frac{1}{2}\|z^+\|^2-\frac{1}{2}\|z^-\|^2-\delta|z|_2^2+c_\delta|\Theta|.
\end{equation*}
Since the decomposition $X=X^-\oplus X^+$ is orthogonal with respect to the $L^2$-norm, we have $|z^+|_2\leq|z|_2$. Since all norms are equivalent on $\oplus_{j=0}^k\mathbb{R}e_j$, there is $c_1>0$ such that $c_1\|z^+\|^2\leq|z|_2^2$. Hence
\begin{equation*}
\Phi(z)\leq\big(\frac{1}{2}-c_1\delta\big)\|z^+\|^2-\frac{1}{2}\|z^-\|^2+c_1|\Theta|.
\end{equation*}
Let us choose $\delta$ such that $c_1\delta\geq1$. Then we have
\begin{equation}\label{sept}
\Phi(z)\leq-\frac{1}{2}\|z\|^2+c_\delta|\Theta|\to-\infty\text{ as }\|z\|\to\infty.
\end{equation}
\paragraph{} Let $z\in X^+_k$. Then for every $\varepsilon>0$, we have
\begin{align*}
\Phi(z)&=\frac{1}{2}\|z\|^2-\int_\Theta H(t,x,z)\\
&\geq \frac{1}{2}\|z\|^2-\frac{\varepsilon}{2}|z|_2^2-\frac{c(\varepsilon)}{p+1}|z|_{p+1}^{p+1}\,\,(\text{by }\eqref{deux})\\
&\geq\frac{1}{2}\|z\|^2-c_2\varepsilon|z|_2^2-\frac{c(\varepsilon)}{p+1}|z|_{p+1}^{p+1}\,\,(\text{because } X\hookrightarrow L^2)\\
&\geq \big(\frac{1}{2}-c_2\varepsilon\big)\|z\|^2-\frac{c(\varepsilon)}{p+1}\beta_k^{p+1}\|z\|^{p+1},
\end{align*}
where $\beta_k=\sup_{\substack{w\in X^+_k\\\|w\|=1}}|w|_{p+1}$. By choosing $\varepsilon\leq\frac{1}{2c_2}$ we obtain
\begin{equation*}
\Phi(z)\geq\frac{1}{2}\Big(\frac{1}{2}\|u\|^2-c_3\beta_k^{p+1}\|z\|^{p+1}\Big).
\end{equation*}
Hence, since $\beta_k\to0$ as $k\to\infty$ (see \cite{W}, Lemma $3.8$), we have
\begin{equation}\label{huit}
\|z\|=r_k:=\big(c_3(p+1)\beta_k^{p+1}\big)^{1/(1-p)}\Rightarrow\Phi(z)\geq\frac{1}{2}\big(\frac{1}{2}-\frac{1}{p+1})r_k^2\to\infty,\,k\to\infty.
\end{equation}
\paragraph{} \eqref{sept} and \eqref{huit} show that the conditions $(A_1)$ and $(A_2)$ of Theorem \ref{theo2} are satisfied. In view of Lemmas \ref{regular} and \ref{cerami}, we can apply Theorem \ref{theo2} and get the result.
\end{proof}

\section{Strong solutions of superquadratic elliptic systems}\label{section4}
In this section, we consider the problem
\begin{equation*}\tag{ES}
    \left\{
      \begin{array}{ll}
        -\Delta u=g(x,v)\,\, \textnormal{in }\Omega, & \hbox{} \\
        -\Delta v=f(x,u)\,\,\textnormal{in }\Omega, & \hbox{} \\
        u=v=0\textnormal{ on }\partial\Omega, & \hbox{}
      \end{array}
    \right.
\end{equation*}
and we assume in the sequel that the assumptions $(E_1)$-$(E_5)$ are satisfied.
\paragraph{} Consider the Laplacian as the operator
\begin{equation*}
    -\Delta:H^2(\Omega)\cap H_0^1(\Omega)\subset L^2(\Omega)\to L^2(\Omega),
\end{equation*}
and let $(\varphi_j)_{j\geq1}$ a corresponding system of orthogonal and $L^2(\Omega)$-normalized eigenfunctions, with eigenvalues $(\lambda_j)_{j\geq1}$. Then, writing
\begin{equation*}
    u=\sum\limits_{j=1}^\infty a_j\varphi_j,\quad \textnormal{with } a_j=\int_\Omega u\varphi_j ,
\end{equation*}
we set, for $0\leq s\leq2$
\begin{equation*}
    E^s:=\Big\{u\in L^2(\Omega)\, \big|\,\sum\limits_{j=1}^\infty\lambda_j^s|a_j|^2<\infty \Big\}
\end{equation*}
and
\begin{equation*}
    A^s(u):=\sum\limits_{j=1}^\infty\lambda_j^{s/2}a_j\varphi_j,\quad \forall u\in D(A^s)=E^s.
\end{equation*}
One can verify easily that $A^s$ is an isomorphism onto $L^2(\Omega)$. We denote $A^{-s}:=(A^s)^{-1}$. It is well known (see \cite{Lions-Ma}) that the space $E^s$ is a fractional Sobolev space with the inner product
\begin{equation*}
    \big<u,v\big>_s=\int_\Omega A^suA^sv.
\end{equation*}
\begin{lem}[\cite{Per}]\label{compactness}
$E^s$ embeds continuously in $L^r(\Omega)$ for $s>0$ and $r\geq1$ satisfying $\frac{1}{r}\geq\frac{1}{2}-\frac{s}{N}.$ Moreover, the embedding is compact when the inequality is strict.
\end{lem}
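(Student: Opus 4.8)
The plan is to read this as a Sobolev embedding for the scale of fractional-power spaces $E^s=D(A^s)$, where $A^s=(-\Delta)^{s/2}$ is defined by the spectral decomposition, and to obtain it by combining the classical integer-order embeddings with interpolation. First I would record the three \emph{anchor} identifications $E^0=L^2(\Omega)$, $E^1=H^1_0(\Omega)$ and $E^2=H^2(\Omega)\cap H^1_0(\Omega)$, with equivalent norms: indeed $\|u\|_s^2=\sum_j\lambda_j^s|a_j|^2$ reproduces $\int_\Omega|\nabla u|^2$ for $s=1$ and $\int_\Omega|\Delta u|^2$ for $s=2$, so that elliptic regularity gives the claimed identifications. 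At these integer orders the classical Sobolev embeddings into the critical Lebesgue spaces are available.

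\textbf{Continuous embedding.} The key structural fact, which is precisely the content of the reference \cite{Lions-Ma} already invoked in the text, is the interpolation identity $[E^{s_0},E^{s_1}]_\theta=E^{(1-\theta)s_0+\theta s_1}$ for the complex interpolation functor; this holds because $A$ is positive self-adjoint, so the domains of its fractional powers interpolate as diagonal spaces. Interpolating two critical endpoint embeddings $E^{s_0}\hookrightarrow L^{r_0}$ and $E^{s_1}\hookrightarrow L^{r_1}$, with $1/r_i=1/2-s_i/N$ and $s_0,s_1$ chosen among the anchors $\{0,1,2\}$ straddling the given $s$, and using the Riesz--Thorin identity $[L^{r_0},L^{r_1}]_\theta=L^{r_\theta}$, yields $E^s\hookrightarrow L^r$ with $1/r=1/2-s/N$; the target exponent comes out correctly because $1/r$ is affine in $s$ along the critical line. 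This settles the critical case, and the subcritical case $1/r>1/2-s/N$ follows at once, since $\Omega$ has finite measure and hence $L^{r^*}(\Omega)\hookrightarrow L^r(\Omega)$ whenever $r\le r^*$.

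\textbf{Compactness.} Here I would exploit the gain of regularity rather than reprove a Rellich-type statement. If $1/r>1/2-s/N$ strictly, I choose $s'$ with $\max\big(0,N(\tfrac12-\tfrac1r)\big)<s'<s$, which is possible exactly because the inequality is strict. By the continuous-embedding part, $E^{s'}\hookrightarrow L^r(\Omega)$ is bounded, so it suffices to prove that the inclusion $E^s\hookrightarrow E^{s'}$ is compact for $s>s'$; then $E^s\hookrightarrow L^r$ is compact as the composition of a compact and a bounded map. The compactness of $E^s\hookrightarrow E^{s'}$ is elementary from the spectral description: for a bounded sequence $(u_n)$ in $E^s$ with coefficients $a_j^{(n)}$, a diagonal extraction gives $a_j^{(n)}\to a_j$ for each $j$, while the tail is controlled by $\sum_{j>J}\lambda_j^{s'}|a_j^{(n)}|^2\le\lambda_J^{s'-s}\sum_{j>J}\lambda_j^{s}|a_j^{(n)}|^2\le C\lambda_J^{s'-s}$, which tends to $0$ uniformly in $n$ as $J\to\infty$ because $\lambda_J\to\infty$ and $s'-s<0$; hence the subsequence is Cauchy in $E^{s'}$.

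\textbf{Main obstacle.} The only genuinely delicate step is the continuous embedding, specifically making the interpolation of embeddings rigorous at and near the borderline case $s=N/2$ and correctly identifying the integer anchor spaces together with their classical Sobolev exponents, which degenerate to \emph{all finite $r$} once $N\le 2s$. Once the interpolation identity of \cite{Lions-Ma} is granted, the remaining steps---reduction to the critical exponent, passage to subcritical exponents via boundedness of $\Omega$, and the spectral compactness argument---are routine. An alternative to the whole continuous-embedding step would be to identify $A^{-s}\colon L^2\to L^r$ with a Bessel-potential operator and invoke the Hardy--Littlewood--Sobolev inequality, but the interpolation route keeps everything inside the spectral framework already set up in the paper.
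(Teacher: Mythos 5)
The paper does not prove this lemma at all: it is quoted from Persson \cite{Per} (whose point is precisely that compactness of linear maps survives interpolation) together with the identification of $E^s$ as a fractional Sobolev space from \cite{Lions-Ma}. So your proposal is a genuine proof attempt where the paper offers only a citation. Your compactness step is correct and is actually a different (and more self-contained) route than the one the citation suggests: instead of interpolating a compact embedding against a bounded one \`a la Persson, you factor through the inclusion $E^s\hookrightarrow E^{s'}$ for $s'<s$ and prove that inclusion compact by the spectral tail estimate $\sum_{j>J}\lambda_j^{s'}|a_j^{(n)}|^2\le\lambda_J^{s'-s}\sum_{j>J}\lambda_j^{s}|a_j^{(n)}|^2$. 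That argument is clean, uses only $\lambda_J\to\infty$, and I have no objection to it.

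The gap is in the continuous-embedding step, and it is larger than the borderline issue you flag. Interpolating only between the integer anchors $E^0,E^1,E^2$ traces out, in the $(s,1/r)$-plane, the chord joining the anchor exponents, and this chord coincides with the critical line $1/r=\frac12-\frac{s}{N}$ only when \emph{both} endpoints carry finite critical exponents, i.e.\ when $N\ge5$. For $N=3$ the upper anchor is $E^2\hookrightarrow L^\infty$, so $[E^1,E^2]_\theta=E^{1+\theta}$ lands only in $L^{r}$ with $1/r=(2-s)/6$, which is strictly worse than the critical value $\frac12-\frac{s}{3}$ for every non-integer $s\in(1,2)$ --- not merely near $s=N/2$. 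This is not an academic loss: taking $N=3$, $p=20$, $q=3$ (admissible under $(E_1)$) forces $s\in(1.35,1.5)$ in \eqref{pq}, and the lemma must then deliver $E^{1.4}\hookrightarrow L^{20}$, whereas your chord gives only $E^{1.4}\hookrightarrow L^{10}$; no choice of $s'<s$ repairs this, since the chord exponent only worsens as $s$ decreases. (For $N=4$ the strict subcritical case can be rescued by interpolating against $E^2\hookrightarrow L^R$ with $R$ large, but the critical case cannot.) The correct route is the one you relegate to an ``alternative'': identify $E^s$ with the Bessel-potential space $H^s$ (this is the content of \cite{Lions-Ma}) and invoke the sharp fractional Sobolev embedding via the Hardy--Littlewood--Sobolev inequality, or simply cite the standard fractional embedding theorem as the paper does. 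With that substitution your reduction of compactness to the inclusion $E^s\hookrightarrow E^{s'}$ then closes the argument.
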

By assumption $(E_1)$, there exist $s,t>0$ such that $s+t=2$ and
\begin{equation}\label{pq}
    \frac{1}{p}>\frac{1}{2}-\frac{s}{N}\quad \textnormal{and}\quad \frac{1}{q}>\frac{1}{2}-\frac{t}{N}.
\end{equation}
We consider the functional
\begin{equation*}
    \Phi(u,v):=\int_\Omega A^suA^tv-\Psi(u,v),\quad (u,v)\in E^s\times E^t.
\end{equation*}
where
\begin{equation*}
    \Psi(u,v)=\int_\Omega\Big(F(x,u)+G(x,v)\Big).
\end{equation*}
It follows from Lemma \ref{compactness} that the inclusions $E^s\hookrightarrow L^p(\Omega)$ and  $E^t\hookrightarrow L^q(\Omega)$ are continuous. This fact in conjunction with the estimate
\begin{equation*}
   \Big|\int_\Omega A^suA^tv\Big|\leq  |A^su|_2|A^tu|_2=\|u\|_s\|v\|_t,
\end{equation*}
imply that the preceding functional $\Phi$ is well defined on $E:=E^s\times E^t$.\\
Now a standard argument shows that if assumption $(E_1)$ holds, then the functional $\Phi$ is of class $\mathcal{C}^1$ on $E$ and its critical points are weak solutions of \eqref{s}.\\
We recall that $(u,v)\in E^s\times E^t$ is a weak solution of \eqref{s} if
\begin{multline*}
    \int_\Omega \big(A^suA^tk+A^shA^tv\big)-\int_\Omega\Big(hf(x,u)+kg(x,v)\Big)=0,\quad\forall(h,k)\in E^s\times E^t.
\end{multline*}
 The following regularity result is due to de Figueiredo and Felmer \cite{Figue-Fel}.
\begin{lem}
If $(u,v)\in E^s\times E^t$ is a weak solution of \eqref{s}, then $(u,v)$ is a strong solution of \eqref{s}.
\end{lem}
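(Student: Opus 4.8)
The plan is to establish regularity by a bootstrap built on the $L^r$-theory for the Dirichlet problem of the Laplacian, in the spirit of de Figueiredo and Felmer \cite{Figue-Fel}. The starting point is the integrability already encoded in the weak formulation: since $(u,v)\in E^s\times E^t$ with $s+t=2$, Lemma \ref{compactness} together with the exponent conditions \eqref{pq} gives the continuous embeddings $E^s\hookrightarrow L^p(\Omega)$ and $E^t\hookrightarrow L^q(\Omega)$, so that $u\in L^p(\Omega)$ and $v\in L^q(\Omega)$. The growth hypothesis $(E_1)$ then yields $f(x,u)\in L^{p/(p-1)}(\Omega)$ and $g(x,v)\in L^{q/(q-1)}(\Omega)$, since $|f(x,u)|^{p/(p-1)}\leq C'(1+|u|^p)\in L^1(\Omega)$ and symmetrically for $g$.

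Next I would read off the two scalar equations from the weak formulation. Testing with pairs $(h,0)$ and $(0,k)$ and using $A^sA^t=A^{s+t}=-\Delta$ together with the self-adjointness of the fractional operators, the defining identity reduces to $\int_\Omega(-\Delta u)\,k=\int_\Omega g(x,v)\,k$ and $\int_\Omega(-\Delta v)\,h=\int_\Omega f(x,u)\,h$ for all admissible $h,k$; that is, $-\Delta u=g(x,v)$ and $-\Delta v=f(x,u)$ hold distributionally, with $u,v$ vanishing on $\partial\Omega$ in the sense inherited from $E^s,E^t$. Invoking the Calder\'on--Zygmund / Agmon--Douglis--Nirenberg estimates for the Dirichlet Laplacian --- if $-\Delta w=\phi\in L^r(\Omega)$ with $w=0$ on $\partial\Omega$ and $1<r<\infty$, then $w\in W^{2,r}(\Omega)\cap W^{1,r}_0(\Omega)$ --- and applying it to each equation with the right-hand sides just estimated places $u$ and $v$ in second-order Sobolev spaces, after which the equations hold almost everywhere.

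The point that requires care, and which I expect to be the main obstacle, is exponent bookkeeping: one application of the $L^r$-theory gives membership in a $W^{2,\cdot}$ space whose exponent is dictated solely by the integrability of the nonlinear right-hand side, and one must check that this matches the exponents claimed in the definition. When the first pass falls short, the remedy is to bootstrap: use $W^{2,r}(\Omega)\hookrightarrow L^{r^*}(\Omega)$ (with $1/r^*=1/r-2/N$ when $r<N/2$, and $W^{2,r}\hookrightarrow L^\infty$ once $r>N/2$) to upgrade the integrability of $u$ and $v$, re-estimate $f(x,u)$ and $g(x,v)$ in better Lebesgue spaces, and re-apply the elliptic regularity. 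I would verify that the subcritical condition $\frac1p+\frac1q>1-\frac2N$ in $(E_1)$ is precisely what forces this iteration to strictly increase integrability and to terminate after finitely many steps in the stated spaces. Finally, the compatibility of the limiting memberships with the homogeneous boundary data --- that is, that one genuinely lands in $W^{1,\cdot}_0$ and not merely $W^{1,\cdot}$ --- follows because at each stage $u$ (resp.\ $v$) is the unique $W^{2,\cdot}\cap W^{1,\cdot}_0$ solution of a linear Dirichlet problem, so the zero trace is preserved throughout the bootstrap.
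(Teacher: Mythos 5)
Your strategy is the right one, and it is in fact the strategy of the source: the paper gives no proof of this lemma at all, quoting it from de Figueiredo and Felmer \cite{Figue-Fel}, whose argument is the same $L^r$ elliptic bootstrap you outline. Moreover, the exponent bookkeeping you deferred does check out: composing two steps of the iteration, the reciprocals of the integrability exponents of $u$ evolve by the affine map $x\mapsto (p-1)(q-1)x-\frac{2q}{N}$, whose slope exceeds $1$ since $p,q>2$, and the subcritical inequality $\frac{1}{p}+\frac{1}{q}>1-\frac{2}{N}$ says exactly that the starting value $\frac{1}{p}$ lies strictly below the fixed point $\frac{2q}{N(pq-p-q)}$; hence these reciprocals decrease and diverge to $-\infty$, so after finitely many steps the right-hand sides lie in some $L^r$ with $r>N/2$, then $u,v\in L^\infty(\Omega)$, and finally $u,v\in W^{2,r}(\Omega)$ for every $r<\infty$. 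This also disposes of the ``crossed'' exponents of the first pass (the equation $-\Delta u=g(x,v)$ initially yields $u\in W^{2,q/(q-1)}$, not $W^{2,p/(p-1)}$), since at the last stage every finite exponent is available.

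The genuine gap is your treatment of the boundary condition, which is the one real subtlety of the lemma. The phrase ``vanishing on $\partial\Omega$ in the sense inherited from $E^s,E^t$'' has no content in general: \eqref{pq} only forces $t>N\big(\frac{1}{2}-\frac{1}{q}\big)$, which can well be smaller than $\frac{1}{2}$, and elements of $E^t$ then carry no boundary trace whatsoever. Consequently the distributional equations determine $u$ and $v$ only up to harmonic functions, and your closing argument --- that $u$ is ``the unique $W^{2,\cdot}\cap W_0^{1,\cdot}$ solution of a linear Dirichlet problem, so the zero trace is preserved'' --- is circular, because membership of $u$ in $W^{2,\cdot}\cap W_0^{1,\cdot}$ is precisely what is to be proved. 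The correct identification uses the spectral structure instead of traces: testing the weak formulation with $(0,\varphi_j)$ gives $\lambda_j\int_\Omega u\varphi_j=\int_\Omega g(x,v)\varphi_j$ for every $j$, while the Calder\'on--Zygmund solution $\tilde u\in W^{2,q/(q-1)}(\Omega)\cap W_0^{1,q/(q-1)}(\Omega)$ of $-\Delta\tilde u=g(x,v)$ with zero boundary values satisfies the same identities by Green's formula (both $\tilde u$ and $\varphi_j$ vanish on $\partial\Omega$). Since $t\leq 2$, \eqref{pq} yields $\frac{1}{q}>\frac{1}{2}-\frac{2}{N}$, hence $W^{2,q/(q-1)}(\Omega)\hookrightarrow L^2(\Omega)$; thus $u$ and $\tilde u$ are two $L^2$ functions with identical Fourier coefficients in the orthonormal basis $(\varphi_j)$, so $u=\tilde u=A^{-2}g(x,v)$. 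The same argument applies to $v$ and at every subsequent stage, after which your bootstrap goes through as written.
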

\paragraph{} We endow $E=E^s\times E^t$ with the inner product
\begin{equation*}
    \big<(u,v),(\phi,\varphi)\big>_{s\times t}=\big<u,\phi\big>_s+\big<v,\varphi\big>_t,\quad (u,v),(\phi,\varphi)\in E,
\end{equation*}
and the associated norm $\|(u,v)\|_{s\times t}^2=\big<(u,v),(u,v)\big>_{s\times t}$.
\paragraph{} In the following we may assume without loss of generality that $s\geq t$.\\
 One can easily verify that $E$ has the orthogonal decomposition \big(with respect to $\big<\cdot,\cdot\big>_{s\times t}$\big) $E=E^+\oplus E^-$, where
\begin{equation}\label{}
    E^+:=\Big\{(u,A^{s-t}u)\,|\,u\in E^s\Big\}\quad \textnormal{and}\quad E^-:=\Big\{(u,-A^{s-t}u)\,|\,u\in E^s\Big\}.
\end{equation}
If we denote by $P^\pm:E\to E^\pm$ the orthogonal projections, then a direct calculation yields
\begin{equation}\label{}
    P^\pm(u,v)=\frac{1}{2}\big(u\pm A^{t-s}v,v\pm A^{s-t}u\big),\quad \forall (u,v)\in E,
\end{equation}
and
\begin{equation}\label{phi}
    \Phi(u,v)=\frac{1}{2}\|P^+(u,v)\|_{s\times t}^2-\frac{1}{2}\|P^-(u,v)\|_{s\times t}^2-\int_\Omega\Big(F(x,u)+G(x,v)\Big).
\end{equation}
\paragraph{} Let us recall the following technical result, which will play a crucial role in the verification of the Cerami condition.
\begin{prop}[\cite{Liu}, Lemma $3.2$]\label{liu}
Let $x\in\mathbb{R}^N$, $u,v,s\in\mathbb{R}$ such that $s\geq-1$ and $su+v\neq0$. If $(E_2)$-$(E_4)$ are satisfied then
\begin{align*}
     f(x,u)\Big[s(\frac{s}{2}u+(1+s)v)\Big]&+F(x,u)-F(x,u+v)\leq0,\\
g(x,u)\Big[s(\frac{s}{2}u+(1+s)v)\Big]&+G(x,u)-G(x,u+v)\leq0.
\end{align*}
\end{prop}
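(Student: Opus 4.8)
The plan is to prove both inequalities in parallel, since the hypotheses imposed on $g$ are identical to those on $f$, so I will concentrate on the first one and obtain the second verbatim at the end. First I would fix $x$ and suppress it, writing $f(\xi)$ for $f(x,\xi)$ and $F(\xi)=\int_0^\xi f(\tau)\,d\tau$, so that the statement becomes a purely one-variable claim about the scalar function $f$ and its primitive $F$, with $u,v,s$ now ordinary reals. The role of the hypotheses then becomes transparent: from $(E_2)$ together with $(E_4)$ one gets that $\xi\mapsto f(\xi)/|\xi|$ is nonnegative for $\xi>0$ and nonpositive for $\xi<0$ (it is monotone on each half-line and tends to $0$ at the origin), whence $f(\xi)\xi\ge0$ and $F\ge0$. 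This sign information is exactly what will let me control the case in which the relevant argument crosses zero.

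The engine of the proof is a single pointwise convexity-type inequality: for $a\neq0$ and any real $b$,
\[
F(b)-F(a)\ \ge\ \frac{f(a)}{2a}\,(b^2-a^2).
\]
I would establish this directly from $(E_4)$. Writing $\rho(\xi):=f(\xi)/|\xi|$, which is increasing on each of $(-\infty,0)$ and $(0,\infty)$, one has $F(b)-F(a)=\int_a^b \rho(\tau)|\tau|\,d\tau$, while the right-hand side equals $\frac{f(a)}{a}\int_a^b\tau\,d\tau=\rho(a)\,\mathrm{sign}(a)\int_a^b\tau\,d\tau$. When $a$ and $b$ have the same sign the claim is immediate: on the interval of integration $\rho(\tau)$ compares with $\rho(a)$ in the direction which, after weighting by the sign-definite factor $\tau$ and integrating, yields precisely the asserted bound (one checks the two cases $b>a$ and $b<a$ separately). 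When $a$ and $b$ have opposite signs one splits the integral at the origin and uses the sign of $f$ established in the first paragraph to absorb the crossing contribution. The proposition then follows by specializing $a=u$ and taking for $b$ the scaled-and-translated comparison point encoded by the left-hand side, namely $(1+s)u+v$: the hypothesis $s\ge-1$ guarantees that the scaling factor $1+s$ used in the rearrangement is nonnegative, while $su+v\neq0$ is exactly the statement that this comparison point differs from the base point $u$, which is the degenerate configuration in which the inequality degenerates to an equality. Expanding $b^2-a^2$ and discarding the nonnegative remainder $\tfrac{f(u)}{2u}v^2$ reduces the quadratic bound to the stated bracketed expression.

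Equivalently, and closer to the original argument of Liu, I would package the $s$-dependence by freezing $u,v$ and studying the scalar function $\theta\mapsto\Lambda(\theta)$ obtained by inserting the scaling parameter into the left-hand side; one checks that $\Lambda$ vanishes at the degenerate value (where $su+v=0$) and that $\Lambda'(\theta)$ factors through a difference of the form $\rho$ evaluated at two comparable arguments, so that $(E_4)$ forces $\Lambda'$ to keep a constant sign on either side of the degenerate value, and monotonicity of $\Lambda$ toward its value there gives the inequality. In either formulation the only delicate point, and the main obstacle, is the bookkeeping of signs when the argument passes through $0$: one must invoke $(E_2)$ to know that $f$ vanishes at the origin and inherits the sign of its argument nearby, so that the monotone-rearrangement estimate survives the change of sign. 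The inequality for $g$ and $G$ is obtained by the identical computation, replacing $f,F$ with $g,G$ and using the corresponding halves of $(E_2)$ and $(E_4)$.
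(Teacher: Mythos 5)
The paper offers no proof of this proposition at all --- it is imported verbatim from Liu's paper via the bracketed citation --- so your attempt stands or falls on its own merits, and it contains a genuine gap. First, the good parts: your reading of the (misprinted) statement is the right one, since the form actually used in the proof of Lemma \ref{conditioncerami} is $f(x,u)\bigl[s\bigl(\tfrac{s}{2}+1\bigr)u+(1+s)v\bigr]+F(x,u)-F\bigl(x,(1+s)u+v\bigr)\le 0$; and your algebra reducing this to the ``engine'' is correct, because with $b=(1+s)u+v$ and $t=1+s$ one has $\frac{f(u)}{2u}\bigl(b^{2}-u^{2}\bigr)-f(u)\bigl[tb-\tfrac{u}{2}t^{2}-\tfrac{u}{2}\bigr]=\frac{f(u)}{2u}(b-tu)^{2}=\frac{f(u)}{2u}v^{2}\ge 0$. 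The gap is that the engine itself, $F(b)-F(a)\ge\frac{f(a)}{2a}(b^{2}-a^{2})$ for all $a\ne 0$ and \emph{all} $b$, is false under $(E_2)$--$(E_4)$ alone when $a$ and $b$ have opposite signs. Take $f(x,\xi)=\xi^{3}$ for $\xi\ge 0$ and $f(x,\xi)=\varepsilon\xi^{3}$ for $\xi<0$, with $0<\varepsilon<1$: then $\rho(\xi):=f(\xi)/|\xi|$ equals $\xi^{2}$ on $(0,\infty)$ and $-\varepsilon\xi^{2}$ on $(-\infty,0)$, so $(E_2)$--$(E_4)$ hold, yet for $a=1$, $b=-1$ the engine asserts $\frac{\varepsilon-1}{4}\ge 0$. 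Your treatment of the crossing case --- splitting the integral at the origin and using $F\ge 0$, $f(\xi)\xi\ge 0$ --- actually proves only the weaker bound $F(b)-F(a)\ge-\frac{af(a)}{2}$, i.e.\ the engine with $b$ replaced by $0$; the discarded term $\frac{f(a)}{2a}b^{2}$ cannot be recovered without symmetry of $f$.

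There is a structural warning sign you should have caught: your reduction never genuinely uses $s\ge-1$ (the identity displayed above holds for every real $t$), so if the engine were true the proposition would hold for all $s\in\mathbb{R}$ --- and it does not: with the $f$ above, $u=1$, $v=0$, $s=-2$, the claimed inequality becomes $F(-1)\ge F(1)$, which is false. The constraint $t=1+s\ge 0$ is exactly what rescues the crossing case, and the correct argument is a case distinction. Writing the claim as $F(b)-F(u)\ge\psi(t):=f(u)\bigl[tb-\tfrac{u}{2}t^{2}-\tfrac{u}{2}\bigr]$ for $t\ge 0$ (and setting aside the trivial cases $u=0$ and $f(u)=0$): if $ub>0$ or $b=0$, then $\max_{t\ge 0}\psi=\frac{f(u)}{2u}(b^{2}-u^{2})$ and your same-sign engine --- which is correct --- finishes the proof; if $ub<0$, the vertex $b/u$ of the concave parabola $\psi$ is negative, hence $\max_{t\ge 0}\psi=\psi(0)=-\frac{uf(u)}{2}$, and the weaker bound you actually proved is exactly enough. (Alternatively, your argument is valid verbatim if one adds the symmetry hypothesis $(E_5)$, since then $F$ is even and $f(a)/a=f(|a|)/|a|$, reducing the engine to its same-sign case; but $(E_5)$ is not among the hypotheses of the proposition.) Your second, derivative-based sketch suffers from the same defect: for $u>0>w=(1+s)u+v$ the derivative is $f(u)w-uf(w)=uw\bigl[\rho(u)+\rho(w)\bigr]$, whose sign is not determined by the monotonicity of $\rho$ alone, so the claimed constancy of sign on either side of the degenerate value fails precisely in the crossing regime.
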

\begin{lem}\label{conditioncerami}
Let $(z_n=(u_n,v_n))\subset E$ and $c>0$ such that
\begin{equation*}
\Phi(z_n)\to c\quad\text{and}\quad \big(1+\|z_n\|_{s\times t}\big)\Phi'(z_n)\to0.
\end{equation*}
Then $(z_n)$ has a convergent subsequence.
\end{lem}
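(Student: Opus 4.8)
The plan is to reproduce the two-step structure of Lemma~\ref{cerami}: first show that every Cerami sequence $(z_n)=(u_n,v_n)$ at the level $c>0$ is bounded in $E$, and then promote boundedness to strong convergence. The second step is routine and parallels the closing argument of Lemma~\ref{cerami}. Once $(z_n)$ is bounded we may assume $z_n\rightharpoonup z$, hence $z_n^{\pm}:=P^\pm z_n\rightharpoonup z^{\pm}$. Testing $\Phi'(z_n)-\Phi'(z)$ against $z_n^+-z^+\in E^+$ gives $\|z_n^+-z^+\|_{s\times t}^2=\langle\Phi'(z_n)-\Phi'(z),z_n^+-z^+\rangle_{s\times t}+\int_\Omega\big[(f(x,u_n)-f(x,u))h_n+(g(x,v_n)-g(x,v))k_n\big]$ with $(h_n,k_n)=z_n^+-z^+$; the \emph{compact} embeddings $E^s\hookrightarrow L^p$, $E^t\hookrightarrow L^q$ of Lemma~\ref{compactness} together with the growth bound $(E_1)$ force this integral to vanish, while $\Phi'(z_n)\to0$ and $z_n^+-z^+\rightharpoonup0$ kill the remaining term. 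Thus $z_n^+\to z^+$, and symmetrically $z_n^-\to z^-$, so $z_n\to z$. All the difficulty therefore lies in the boundedness step.

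For boundedness I argue by contradiction, assuming $\|z_n\|_{s\times t}\to\infty$ and setting $w_n:=z_n/\|z_n\|_{s\times t}$, so $\|w_n\|_{s\times t}=1$; after passing to a subsequence, $w_n\rightharpoonup w$ and, by Lemma~\ref{compactness}, $w_n\to w$ in $L^p\times L^q$ and a.e.\ on $\Omega$. I record two consequences of the Cerami hypothesis used throughout: since $\|z_n\|_{s\times t}\|\Phi'(z_n)\|\to0$ we have $\langle\Phi'(z_n),z_n\rangle\to0$, whence $\int_\Omega\big(\widetilde F(x,u_n)+\widetilde G(x,v_n)\big)=\Phi(z_n)-\tfrac12\langle\Phi'(z_n),z_n\rangle\to c$, where $\widetilde F:=\tfrac12 f(x,u)u-F(x,u)$ and $\widetilde G:=\tfrac12 g(x,v)v-G(x,v)$. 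The monotonicity $(E_4)$ makes $\widetilde F,\widetilde G$ nonnegative and nondecreasing in $|u|$, respectively $|v|$ (and, with $(E_2)$, also $F,G\ge0$); this is the structural substitute for \eqref{ar1} on which the whole proof rests.

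If $w\neq0$, then on the positive-measure set $\{w\neq0\}$ at least one of $|u_n|,|v_n|=\|z_n\|_{s\times t}|w_n|$ tends to $\infty$, so by the superquadraticity $(E_3)$, the nonnegativity of $F,G$ and Fatou's lemma, $\int_\Omega\big(F(x,u_n)+G(x,v_n)\big)/\|z_n\|_{s\times t}^2\to\infty$. This is incompatible with the bound $\int_\Omega(F+G)/\|z_n\|_{s\times t}^2\le\tfrac12+o(1)$ read off from $\tfrac12\|w_n^+\|_{s\times t}^2=\Phi(z_n)/\|z_n\|_{s\times t}^2+\tfrac12\|w_n^-\|_{s\times t}^2+\int_\Omega(F+G)/\|z_n\|_{s\times t}^2$. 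Hence $w=0$.

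The vanishing case $w=0$ is the main obstacle, and it is here that Proposition~\ref{liu} is indispensable. I select $t_n\in[0,1]$ with $\Phi(t_n z_n)=\max_{t\in[0,1]}\Phi(t z_n)$. The upper bound is clean: for large $n$ the maximizer is interior, so $\langle\Phi'(t_n z_n),t_n z_n\rangle=0$ and therefore $\Phi(t_n z_n)=\int_\Omega\big(\widetilde F(x,t_n u_n)+\widetilde G(x,t_n v_n)\big)\le\int_\Omega\big(\widetilde F(x,u_n)+\widetilde G(x,v_n)\big)\to c$, the inequality coming from $t_n\le1$ and the monotonicity just noted (the boundary cases $t_n\in\{0,1\}$ are even more directly incompatible with the lower bound). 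For the lower bound I fix $R>0$: since $w=0$ and $R$ is fixed, the nonlinear part of $\Phi(Rw_n)$ tends to $0$, and the precise role of Proposition~\ref{liu} is to dispose of the strongly indefinite quadratic term by a Nehari--Pankov type comparison, bounding $\Phi(t_n z_n)$ from below by the value of $\Phi$ on $\mathbb{R}^+w_n^+\oplus E^-$, i.e.\ essentially by $\tfrac{R^2}{2}\|w_n^+\|_{s\times t}^2-o(1)$. Finally $\|w_n^+\|_{s\times t}\not\to0$, for otherwise $\|w_n^-\|_{s\times t}\to1$ and $\Phi(z_n)/\|z_n\|_{s\times t}^2\to0$ would give $\int_\Omega(F+G)/\|z_n\|_{s\times t}^2\to-\tfrac12<0$, impossible since $F,G\ge0$. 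Letting $R\to\infty$ then yields $\Phi(t_n z_n)\to\infty$, contradicting the upper bound and completing the boundedness step. The delicate point throughout is exactly the control of the sign of the indefinite quadratic term along $Rw_n$, which is what Proposition~\ref{liu}, available thanks to $(E_4)$, is designed to provide.
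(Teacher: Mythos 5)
Your architecture (boundedness by contradiction with $w_n=z_n/\|z_n\|_{s\times t}$, a dichotomy on the weak limit $w$, Fatou plus $(E_3)$ when $w\neq0$, keeping $\|w_n^+\|_{s\times t}$ away from $0$, and a standard compactness ending) matches the paper's, and those pieces of your argument are correct. The gap is in the case $w=0$, which is the crux of the lemma. Your lower bound, ``Proposition~\ref{liu} bounds $\Phi(t_nz_n)$ from below by essentially $\tfrac{R^2}{2}\|w_n^+\|_{s\times t}^2-o(1)$,'' is not justified. What Proposition~\ref{liu} actually yields is a comparison of the form
\begin{equation*}
\Phi\big((1+s)z+y\big)\leq\Phi(z)+\big\langle \Phi'(z),\,s(\tfrac{s}{2}+1)z+(1+s)y\big\rangle,\qquad s\geq-1,\ y\in E^-,
\end{equation*}
i.e.\ the indefinite quadratic term is absorbed only up to an error measured by $\Phi'$ \emph{at the base point} $z$. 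To compare $\Phi(t_nz_n)$ with $\Phi(Rw_n^+)$ you must take $z=t_nz_n$, $1+s=\frac{R}{t_n\|z_n\|_{s\times t}}$, $y=-Rw_n^-$, and the error then contains $\langle\Phi'(t_nz_n),w_n^-\rangle$. Nothing controls $\Phi'(t_nz_n)$ in $E^-$ directions: the Cerami hypothesis controls $\Phi'$ only along the original sequence $(z_n)$, and interior maximality of $t_n$ on the segment $[0,z_n]$ gives only the single scalar identity $\langle\Phi'(t_nz_n),z_n\rangle=0$. Moreover, by $(E_1)$, $\|\Phi'(t_nz_n)\|$ may grow like $\|z_n\|_{s\times t}^{\max(p,q)-1}$, so the factor $\frac{1}{\|z_n\|_{s\times t}}$ in $1+s$ does not rescue the estimate. (Your dismissal of the boundary cases $t_n\in\{0,1\}$ presupposes this same lower bound, so it falls with it.) In effect you would need $t_nz_n$ to dominate $\Phi$ on all of $\mathbb{R}^+z_n\oplus E^-$, i.e.\ the Nehari--Pankov projection, which is exactly the reduction the paper avoids because under $(E_4)$ with merely ``increasing'' maps it is not available.

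The repair is to drop the maximizer $t_n$ entirely and base the comparison at $z_n$ itself, where the Cerami weight is available; this is the paper's proof. Writing $rw_n^+=(1+s_n)z_n+y_n$ with $1+s_n=\frac{r}{\|z_n\|_{s\times t}}$ and $y_n=-rw_n^-$, the error term obeys
\begin{equation*}
\big|\big\langle\Phi'(z_n),\,s_n(\tfrac{s_n}{2}+1)z_n+(1+s_n)y_n\big\rangle\big|\leq C\big(1+\|z_n\|_{s\times t}\big)\|\Phi'(z_n)\|\to0,
\end{equation*}
precisely because the hypothesis is a Cerami (not Palais--Smale) condition; hence $\Phi(rw_n^+)\leq\Phi(z_n)+1$ for large $n$. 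Since $F,G\geq0$ force $\|w_n^+\|_{s\times t}^2\geq\tfrac13$ eventually, and since $w^+=0$ (in particular when $w=0$) makes the nonlinear part of $\Phi(rw_n^+)$ vanish via Lemma~\ref{compactness}, one gets $c+2\geq\Phi(rw_n^+)\geq\frac{r^2}{6}-o(1)$, a contradiction for $r$ large. Your upper bound $\limsup\Phi(t_nz_n)\leq c$ via the monotonicity of $\widetilde F,\widetilde G$ is correct but becomes unnecessary: $\Phi(z_n)\to c$ already serves as the comparison value.
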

\begin{proof}
Let us first show that $(z_n)$ is bounded.\\
On the contrary if $(z_n)$ is  unbounded, then we can assume that $\|z_n\|_{s\times t}\to\infty$. Set $w_n=z_n/\|z_n\|_{s\times t}$. Then, up to a subsequence, we have $w_n\rightharpoonup w$ in $E$.\\
Since $F,G\geq0$, we have for $n$ big enough
\begin{equation*}
c-1\leq\Phi(z_n)\leq\frac{1}{2}\|z_n^+\|_{s\times t}^2-\frac{1}{2}\|z_n^-\|_{s\times t}^2=\frac{1}{2}\|z_n\|_{s\times t}^2\big(\|w_n^+\|_{s\times t}^2-\|w_n^-\|_{s\times t}^2\big).
\end{equation*}
Recall that $\|w_n^+\|_{s\times t}^2+\|w_n^-\|_{s\times t}^2=1$. Hence $\|w_n^+\|_{s\times t}^2\geq\frac{1}{2}+\frac{c-1}{\|z_n\|_{s\times t}^2}$.
But since $\|z_n\|_{s\times t}^2\to\infty$, we have for $n$ big enough $\frac{|c-1|}{\|z_n\|_{s\times t}^2}\leq\frac{1}{6}$. It follows that $\|w_n^+\|_{s\times t}^2\geq\frac{1}{3}$.\\
Now fix $r>0$ and set $s_n=\frac{r}{\|z_n\|_{s\times t}}-1$ and $y_n=-\frac{r}{\|z_n\|_{s\times t}}z_n^-=-rw_n^-$. Then $s_n\geq-1$, $y_n\in E^-$ and the sequences $(s_n)$ and $(y_n)$ are bounded.\\
By setting $y_n=(a_n,b_n)$, where $a_n\in E^s$ and $b_n\in E^t$, we have
\begin{align*}
\big<\Phi'(z_n),s_n(\frac{s_n}{2}+1)z_n&+(1+s_n)y_n\big>=\big <z_n^+-z_n^-,s_n(\frac{s_n}{2}+1)z_n+(1+s_n)y_n\big>\\
&-\int_\Omega\big(s_n(\frac{s_n}{2}+1)u_n+(1+s_n)a_n\big)f(x,u_n)\\
&-\int_\Omega\big(s_n(\frac{s_n}{2}+1)v_n+(1+s_n)b_n\big)g(x,v_n).
\end{align*}
Since $(s_n)$ and $(y_n)$ are bounded, we obtain
\begin{align*}
\big|\big<\Phi'(z_n),s_n(\frac{s_n}{2}+1)z_n+(1+s_n)y_n\big>\big|&\leq\|\Phi'(z_n)\|\| s_n(\frac{s_n}{2}+1)z_n+(1+s_n)y_n\|\\
&\leq C_1\|\Phi'(z_n)\|\|z_n\|_{s\times t}+C_2\|\Phi'(z_n)\|\to0. 
\end{align*}
We deduce that for $n$ big enough
\begin{align}
\nonumber \big <z_n^+-z_n^-,s_n(\frac{s_n}{2}+1)z_n&+(1+s_n)y_n\big>\leq 1\\
\nonumber &+\int_\Omega\big(s_n(\frac{s_n}{2}+1)u_n+(1+s_n)a_n\big)f(x,u_n)\\
&+\int_\Omega\big(s_n(\frac{s_n}{2}+1)v_n+(1+s_n)b_n\big)g(x,v_n).\label{etoille}
\end{align}
Noting that $rw_n^+=z_n+(s_nz_n+y_n)$ and
\begin{align*}
\big<z_n^+-z_n^-,s_n(\frac{s_n}{2}+1)z_n+(1+s_n)y_n\big>=-\frac{1}{2}\|z_n^+\|_{s\times t}^2+\frac{1}{2}\|z_n^-\|_{s\times t}^2\\
+\frac{1}{2}r^2\|w_n^+\|_{s\times t}^2+\frac{1}{2}r^2\|w_n^-\|_{s\times t}^2,
\end{align*}
we have
\begin{align*}
\Phi(rw_n^+)-\Phi(z_n)&=-\frac{1}{2}\|w_n^-\|_{s\times t}^2+\big<z_n^+-z_n^-,s_n(\frac{s_n}{2}+1)z_n+(1+s_n)y_n\big>\\
&+\int_\Omega\Big(F(x,u_n)-F(x,(1+s_n)u_n+a_n)\Big)\\
&+\int_\Omega\Big(G(x,u_n)-G(x,(1+s_n)u_n+a_n)\Big).
\end{align*}
We deduce by using \eqref{etoille} that for $n$ sufficiently large
\begin{align*}
\Phi(rw_n^+)-\Phi(z_n)&\leq 1+\int_\Omega\big(s_n(\frac{s_n}{2}+1)u_n+(1+s_n)a_n\big)f(x,u_n)\\
&\qquad+\int_\Omega\Big[F(x,u_n)-F\big(x,(1+s_n)u_n+a_n\big)\Big]\\
&\qquad+ \int_\Omega\big(s_n(\frac{s_n}{2}+1)v_n+(1+s_n)b_n\big)g(x,u_n)\\
&\qquad+\int_\Omega\Big[G(x,u_n)-G\big(x,(1+s_n)v_n+b_n\big)\Big].
\end{align*}
We conclude by using Proposition \ref{liu} that for $n$ big enough
\begin{equation*}
\Phi(rw_n^+)-1\leq\Phi(z_n).
\end{equation*}
If $w^+=0$, then $w_n^+\to0$ in $L^p(\Omega)\times L^q(\Omega)$ by Lemma \ref{compactness}. By $(E_1)$ and Theorem A.2 in \cite{W} we have 
\begin{equation*}
\int_\Omega F\big(x,(1+s_n)u_n+a_n\big)\to0 \text{ and } \int_\Omega G\big(x,(1+s_n)v_n+b_n\big)\to0.
\end{equation*}
Since $\Phi(z_n)\to c$, we then obtain for $n$ sufficiently large
\begin{align*}
c+1\geq\Phi(z_n)&\geq\Phi(rw_n^+)-1\\
&= \frac{1}{2}r^2\|w_n\|_{s\times t}^2-\int_\Omega F\big(x,(1+s_n)u_n+a_n\big)\\
&\qquad-\int_\Omega G\big(x,(1+s_n)v_n+b_n\big)-1\\
&\geq \frac{1}{2}r^2\|w_n\|_{s\times t}^2-2\\
&\geq\frac{r^2}{6}-2\quad\big(\text{because }\|w_n^+\|_{s\times t}^2\geq\frac{1}{3}\big).
\end{align*}
This leads to a contradiction if we take $r$ arbitrary large. Hence $w^+\neq0$ and then $w\neq0$.\\
Setting $w_n=(w_n^1,w_n^2)\in E^s\times E^t$, we have
\begin{align*}
\frac{\Phi(z_n)}{\|z_n\|_{s\times t}^2}&=\frac{1}{2}\|w_n^+\|_{s\times t}^2-\frac{1}{2}\|w_n^-\|_{s\times t}^2-\int_\Omega\frac{F(x,u_n)}{\|z_n\|_{s\times t}^2}-\int_\Omega\frac{G(x,u_n)}{\|z_n\|_{s\times t}^2}\\
&=\frac{1}{2}\|w_n^+\|_{s\times t}^2-\frac{1}{2}\|w_n^-\|_{s\times t}^2-\int_{w_n^1\neq0}\frac{F(x,w^1_n\|z_n\|_{s\times t})}{\big|w_n^1\|z_n\|_{s\times t}\big|^2}\\
&\quad-\int_{w_n^2\neq0}\frac{G(x,w^2_n\|z_n\|_{s\times t})}{\big|w_n^1\|z_n\|_{s\times t}\big|^2}\\
&\leq C-\int_{w_n^1\neq0}\frac{F(x,w^1_n\|z_n\|_{s\times t})}{\big|w_n^1\|z_n\|_{s\times t}\big|^2}-\int_{w_n^2\neq0}\frac{G(x,w^2_n\|z_n\|_{s\times t})}{\big|w_n^1\|z_n\|_{s\times t}\big|^2}.
\end{align*}
Since $(\Phi(z_n))$ is bounded and $\|z_n\|_{s\times t}\to\infty$, the LHS converges to $0$. By applying Fatou's lemma and using $(E_3)$, we see that the RHS goes to $-\infty$, which is a contradiction. Consequently the sequence $(z_n)$ is bounded.
\paragraph{} Now a standard argument based on Lemma \ref{compactness} shows that the bounded sequence $(z_n)$ has a convergent subsequence.
\end{proof}
\begin{proof}[Proof of Theorem \ref{mainresult2}]
Let $(a_j)_{j\geq0}$ be an orthonormal basis of $E^s$. We define an orthonormal basis $(e_j)_{j\geq0}$ of $E^+$ by setting
\begin{equation*}
    e_j:=\frac{1}{\sqrt{2}}\big(a_j,A^{s-t}a_j\big).
\end{equation*}
 Let
\begin{equation*}
    E^-_k=E^-\oplus\big(\oplus_{j=0}^k\mathbb{R}e_j\big)\text{ and }E^+_k=\overline{\oplus_{j=k}^\infty\mathbb{R}e_j}.
\end{equation*}
$(E_3)$ implies that for every $\delta>0$ there is $C_\delta>0$ such that
\begin{equation}\label{superquadratic}
    F(x,u)\geq \delta|u|^2-C_\delta\quad\text{and}\quad G(x,u)\geq \delta|u|^2-C_\delta,\quad \forall (x,u).
\end{equation}
\paragraph{} Let $z\in E^-_k$. Then $z=\big(u,A^{s-t}u\big)+\big(v,-A^{s-t}v\big)$, where $v\in E^s$ and $u\in E^s_k:=\oplus_{j=0}^k\mathbb{R}a_j$.
Using \eqref{superquadratic}, the fact that $E^{s-t}$ continuously embeds in $L^2(\Omega)$, and the parallelogram identity, we obtain
\begin{align*}
    \Phi(z)&=\|u\|^2_s-\|v\|^2_s-\int_\Omega\Big(F(x,u+v)+G(x,A^{s-t}(u-v))\Big)\\
&\leq\|u\|^2_s-\|v\|^2_s-2C\delta\big(|u|^2_2+|v|^2_2\big)+2C_\delta|\Omega| .
\end{align*}
Since all the norms are equivalent on the finite-dimensional subspace $E^s_k$, there is a constant $c_1>0$ such that $c_1\|u\|_s\leq|u|_2$. Hence
\begin{equation*}
    \Phi(z)\leq (1-c_2\delta)\|u\|^2_s-\|v\|^2_s-2C_\delta|\Omega|.
\end{equation*}
Choose $\delta>\frac{1}{c_2}$. Hence $\Phi(z)\to-\infty$ as $\|z\|_{s\times t}\to\infty$, and consequently assumption $(A_1)$ of Theorem \ref{theo2} is satisfied.
\paragraph{} Let $z\in E^+_k$. Then $z=(u,A^{s-t}u)$ with $u\in \overline{\oplus_{j=k}^\infty\mathbb{R}a_j}$, and
\begin{align*}
    \Phi(z)&=\frac{1}{2}\|z\|_{s\times t}^2-\int_\Omega\Big(F(x,u)+G(x,A^{s-t}u)\Big)\\
&\geq\frac{1}{2}\|z\|_{s\times t}^2-2\int_\Omega\Big(F(x,u)+G(x,A^{s-t}u)\Big).
\end{align*}
By $(E_1)$ there is a constant $C_1>0$ such that
\begin{equation*}
    |F(x,u)|\leq C_1(1+|u|^p)\quad\text{and}\quad |G(x,A^{s-t}u)|\leq C_1(1+|A^{s-t}u|^q).
\end{equation*}
Hence
\begin{equation*}
    \Phi(z)\geq\|u\|^2_s-2C_1|u|_p^p-2C_1|A^{s-t}u|^q_q-4C_1|\Omega|.
\end{equation*}
We define
\begin{equation*}
    \beta_{1,k}:=\sup_{\substack{u\in\overline{\oplus_{j=k}^\infty\mathbb{R}a_j}\\\|u\|_s=1}}|u|_p,\quad \beta_{2,k}:=\sup_{\substack{v\in\overline{\oplus_{j=k}^\infty\mathbb{R}(A^{s-t}a_j)}\\\|v\|_t=1}}|v|_q,
\end{equation*}
and $\beta_k=\max\big\{\beta_{1,k};\beta_{2,k}\big\}$.\\
Then
\begin{equation*}
    \Phi(z)\geq\|u\|^2_s-2C_1\beta_k^p\|u\|_s^p-2C_1\beta_k^q\|u\|^q_s-4C_1|\Omega|.
\end{equation*}
We may assume without loss of generality that $q\leq p$ and we set
\begin{equation*}
    r_k:=\big(C_1p\beta_k^p\big)^{\frac{1}{2-p}}.
\end{equation*}
Then for $\|z\|_{s\times t}=\sqrt{2}\|u\|_s=r_k$ we have
\begin{equation}\label{betatilde}
    \Phi(z)\geq \widetilde{b_k}:=K\beta_k^{\frac{2p}{2-p}}\Big[\Big(\frac{1}{4}-\frac{1}{p(\sqrt{2})^p}\Big)-A\beta_k^{\frac{2(q-p)}{2-p}}\Big]-4C_1|\Omega|,
\end{equation}
where $K,A>0$ are constant.\\
We know by Lemma $3.8$ in \cite{W} that $\beta_{1,k}\to0$ and $\beta_{2,k}\to0$ as $k\to\infty$. This implies that $\widetilde{b_k}\to\infty$ as $k\to\infty$, and then that assumption $(A_2)$ of Theorem \ref{theo2} is satisfied.
\paragraph{} An argument similar to the one in the proof of Lemma \ref{regular} shows that $\Psi$ is weakly sequentially lower semicontinuous and that $\Psi'$ is weakly sequentially continuous. Since we have previously shown  that the functional $\Phi$ satisfies the Cerami condition at every nonnegative critical level, we can apply Theorem \ref{theo2} and get the desired result.
\end{proof}

\section*{Acknowledgement}
This work was supported by a NSERC grant.
%

%
%

\begin{thebibliography}{30}
%
%
\bibitem{A-R} A. Ambrosetti and P.H. Rabinowitz, Dual variational methods in critical point theory and applications, \textit{J. Functional Analysis} 14 (1973) 349-381.
\bibitem{Bar} V. Barbu, Periodic solutions to unbounded Hamiltonian system, {\it Discrete Contin. Dyn. Syst.} 1 (1995) 277-283.
\bibitem{B} T. Bartsch, Infinitely many solutions of a symmetric Dirichlet problem, {\it Nonlinear Anal.} {\bf 20} (1993) 1205-1216.
\bibitem{Ba-Cla} T. Bartsch, M. Clapp, Critical Point Theory for Indefinite Functionals with Symmetries,  {\it J. Funct. Anal.} 138 (1996) 107-136.
\bibitem{B-D}{} T. Bartsch, Y.H. Ding, Homoclinic solutions of an infinite-dimensional Hamiltonian system, {\it Math. Z.} 240 (2002)
289-310.
\bibitem{B-D1}{} T. Bartsch, Y. H. Ding, On a nonlinear Schr\"{o}dinger equation with periodic potential, {\it Math. Ann.} 313 (1999) 15-37.
\bibitem{Bar-Szu}{} T. Bartsch, A. Szulkin, \textit{Hamiltonian systems: periodic and homoclinic solutions by variational methods}. Handbook of differential equations: ordinary differential equations. Vol. II, 77-146, Elsevier B. V., Amsterdam, 2005.
\bibitem{B-W} T. Bartsch, M. Willem, On an elliptic equation with concave and convex nonlinearities, {\it Proc. Amer. Math. Soc.} {\bf 123} (1995) 3555-3561.
\bibitem{Bat} C. J. Batkam, On a superquadratic elliptic system with strongly indefinite structure, \textit{Appl. Math. Comput.} 233 (2014) 243--251.
\bibitem{B-C} C. J. Batkam, F. Colin, Generalized Fountain Theorem and applications to strongly indefinite semilinear problems, \textit{J. Math. Anal. Appl.} 405 (2013) 438-452.
\bibitem{B-C2}{} C. J. Batkam, F. Colin, On multiple solutions of a semilinear Schr\"{o}dinger equation with periodic potential, \textit{Nonlinear Anal.} \textbf{84} (2013) 39-49.
\bibitem{B-C1} C. J. Batkam, F. Colin, The effects of concave and convex nonlinearities in some noncooperative elliptic systems, \textit{Ann. Mat. Pura Appl.}, in press, DOI 10.1007/s10231-013-0343-9.
\bibitem{Ben} V. Benci, On critical point theory for indefinite functionals in the presence of symmetries,
{\it Trans. Amer. Math. Soc.} {\bf 274} (1982) 533-572.
\bibitem{B-N} H. Brezis, L. Nirenberg, Characterization of the ranges of some nonlinear operators and applications to boundary
value problems, {\it Ann. Sc. Norm. Super. Pisa Cl. Sci.} (4) 5 (1978) 225-326.
\bibitem{Ce} G. Cerami, An existence criterion for the critical points on unbounded manifolds (Italian), {\it Istit. Lombardo Accad. Sci. Lett. Rend. A} 112 (1978) 332-336 (1979).
\bibitem{Cle} P. Cl\'{e}ment, P. Felmer, and E. Mitidieri, Homoclinic orbits for a class of infinite dimensional Hamiltonial systems, {\it Ann.
Sc. Norm. Super. Pisa Cl. Sci.} 24 (1997) 367-393.
\bibitem{Figue-Fel} D. G. de Figueiredo, P. Felmer, On superquadratic elliptic systems, \textit{Trans. Amer. Math. Soc.} 343 (1994) 99-116.
\bibitem{F-W}  P. Felmer, Z.-Q. Wang, Multiplicity for symmetric indefinite functionals: application to Hamiltonian and elliptic systems. \textit{Topol. Methods Nonlinear Anal.} 12 (1998) 207-226.
\bibitem{H-V} J. Hulshof, R. VanDer Vorst, Differential systems with strongly indefinite variational structure, \textit{J. Funct. Anal.} 114 (1993) 32-58.
\bibitem{K-S} W. Kryszewski, A. Szulkin, Generalized linking theorem with an application to a semilinear Schr\"{o}dinger equation, \textit{Adv. Differential Equation} 3 (1998) 441-472.
\bibitem{LiSzul02}  G. Li, A. Szulkin, An asymptotically periodic Schrödinger equation with indefinite linear part, {\it Commun. Contemp. Math.} 4 (2002),  763--776.
\bibitem{Li} Y.Q. Li, A limit index theory and its application, \textit{Nonlinear Anal. TMA} 25 (1995) 1371-1389.
\bibitem{Lions-Ma} J. L. Lions, E. Magenes, \textit{Probl\`{e}mes aux limites non homog\`{e}nes et applications}, Vol. I, Dunod, Paris, 1968.
\bibitem{Liu} S. Liu, On superlinear Schr\"{o}dinger equations with periodic potential, \textit{Calc. Var. Partial Differential Equations} 45 (2012) 1-9.
\bibitem{L-S} S. Liu, Z. Shen, Generalized saddle point theorem and asymptotically linear problems with periodic potential, \textit{Nonlinear Anal.} 86 (2013) 52-57.
\bibitem{Long} Y. Long, Periodic solutions for a class of nonautonomous Hamiltonian system, {\it Nonlinear Anal.} 41 (2000) 455-463.
\bibitem{Mao-al}{} A. Mao, S. Luan, and Y. Ding, Periodic solutions for a class of first order super-quadratic Hamiltonian system, {\it J. Math. Anal. Appl.} 330 (2007) 584-596.
\bibitem{P-S} R. S. Palais, S. Smale, A generalized Morse theory, {\it Bull. Amer. Math. Soc.} 70 1964 165-172.
\bibitem{Per} A. Persson, Compact linear mappings between interpolation spaces, \textit{Ark. Mat.} 5 (1964) 215-219.
\bibitem{R} P.H. Rabinowitz, Periodic solutions of Hamiltonian system, {\it Comm. Pure Appl. Math.} 31 (1978) 157-184.
\bibitem{Sch} M. Schechter, The Use of Cerami Sequences in Critical Point Theory, \textit{Abstract and Applied Analysis}, vol. 2007, Article ID 58948, 28 pages, 2007. doi:10.1155/2007/58948
\bibitem{S-W}{} A. Szulkin, T. Weth, \textit{The method of Nehari manifold}, Handbook of nonconvex analysis and applications, 597-632, Int. Press, Somerville, MA, 2010.
\bibitem{W}  M. Willem, \textit{Minimax Theorems}, Birkhauser, Boston (1996).
\bibitem{Zou} W. Zou, Variant fountain theorems and their applications, {\em Manuscripta Math.} 104 (2001) 343-358.



\end{thebibliography}
\end{document}